\newtheorem{theorem}{Theorem}[section]
\newtheorem{lemma}[theorem]{Lemma}
\newtheorem{proposition}[theorem]{Proposition}
\newtheorem{corollary}[theorem]{Corollary}
\newtheorem{conjecture}[theorem]{Conjecture}
\newenvironment{proof}[1][Proof]{\begin{trivlist}
\item[\hskip \labelsep {\bfseries #1}]}{\end{trivlist}}
\newenvironment{definition}[1][Definition]{\begin{trivlist}
\item[\hskip \labelsep {\bfseries #1}]}{\end{trivlist}}
\newcommand{\qed}{\square}
\begin{document}
\bibliographystyle{siam}

\title{Combinatorial Polytope Enumeration}
\author{Sandeep Koranne \\ Mentor Graphics Corporation \\ West Linn, OR 97068\\ \\  Anand Kulkarni\footnote{This work was supported by a NSF graduate research fellowship.} \\ Department of Industrial Engineering and Operations Research \\ University of California, Berkeley, 94720 \\ \\ sandeep$\_$koranne@mentor.com, anandk@berkeley.edu}
\date{February 9, 2009\footnote{Footnotes and minor revisions added August 2009.}}
\maketitle

\begin{abstract}
We describe a provably complete algorithm for the generation of a tight, possibly exact, superset of all combinatorially distinct simple $n$-facet polytopes in $R^d$, along with their graphs, $f$-vectors, and face lattices.  The technique applies repeated cutting planes and planar sweeps to a $d-$simplex.  Our generator has implications for several outstanding problems in polytope theory, including conjectures about the number of distinct polytopes, the edge expansion of polytopal graphs, and the $d$-step conjecture.





\end{abstract}

\section{Introduction}

A subset $P$ of $R^d$ is called a \emph{convex polyhedron} if it is the intersection of a finite number of half-spaces, or equivalently the set of solutions to a finite system of linear inequalities $Ax\leq b$ ~\cite{grunbaum-book, ziegler-book}. It is called a \emph{convex polytope} if in addition to these properties it is also bounded.  When we consider a specific dimension $d$, we refer to a $d$-polytope.  A $d$-polytope with $n$ irredundant constraints is an $(n,d)$-polytope.  The face poset of a polytope is the set of all faces of $P$ ordered by set inclusion. Two polytopes are isomorphic if their face posets are isomorphic. 

Given $P$ as a convex $d$-polytope in $R^d$, a \emph{face} of a polytope is the intersection of any subset of its half-spaces.  A subset $F$ of $P$ is called a face of $P$ if it is either $\emptyset$, $P$, or the intersection of $P$ with a supporting hyperplane (a hyperplane $h$ of $R^d$ is supporting $P$ if one of the two closed halfspaces of $h$ contains $P$).  The faces of dimension 0 are called \emph{vertices}, dimension 1 are called \emph{edges}, dimension $d-2$ are called \emph{ridges}, and dimension $d-1$ are called \emph{facets}.  The set of faces partially ordered by inclusion is termed the \emph{face poset} or \emph{face lattice}.

The \emph{edge-vertex graph} $G_P$ of a polytope $P$ is the graph formed by the vertices and edges of P.  A graph $G$ is polytopal if there is a polytope whose edge-vertex graph is $G$.   A polytope is \emph{simple} if each vertex in $G_P$ has exactly $d$ neighbors, or equivalently, if each vertex in $P$ lies at the intersection of exactly $d$ hyperplanes.  Say that two simple polytopes are combinatorially equivalent iff their edge-vertex graphs are isomorphic.  As such, any simple polytope is defined fully by its edge-vertex graph up to combinatorial isomorphism, and its full face lattice may be recovered in polynomial time from the graph itself.


Despite their importance in optimization, relatively little is known about many combinatorial aspects of polytope theory.  In particular, good bounds on the number of distinct polytopes in $(n,d)$ are unknown, and at one time it was believed that no efficient methods existed for generating the class of simple $(n,d)$ polytopes\cite{grunbaum-book}.  Additionally, questions about the maximum diameter of $(n,d)$ polytopes, their edge expansion, and their combinatorial properties have persisted for many years.  We describe some of these questions below:

\begin{itemize}
\item{\textbf{Polytope enumeration:} How many combinatorial types of $(n,d)$ polytopes exist?} 
\item{\textbf{Hirsch's conjecture:} What is the maximum diameter of an $(n,d)$ polytope?}
\item{\textbf{Mihail-Vazirani conjecture:} Which classes of polytopal graphs are good edge expanders?}
\item{\textbf{Polytope generation:} How can we generate all combinatorial classes of $n-$facet polytopes from $n-1$-facet polytopes?} 
\end{itemize}

We present a method to inductively enumerate all facets of a $(n,d)$ polytope in the hope of answering these questions, and present sketches of how some of these questions may be addressed using our approach.  





\section{Inductive Polytope Generation}
In this section, we develop an algorithm for generating the graphs of all combinatorially distinct polytopes of a given dimension, beginning with the $d$-simplex.  While it was understood since the 1960s that brute-force enumeration algorithms for polytopes must exist, previous methods had operated over too general a class to be useful in reasoning about polytope structure.  For instance, it was shown by Tarski that the problem was decidable using a theorem of first-order logic, and later efforts enumerated large supersets that contained polytopes as a special case, such as shellable complexes, or $d$-regular graphs \cite{grunbaum-book}, then discriminating between polytopal and nonpolytopal structures using matroids.  The primary approach to enumerating polytopes from scratch was first proposed by Euler, termed the ``beyond-beneath" technique.  This method was made rigorous by Grunbaum and Sreedharan (\cite{grunbaum1967enumeration}), and later exploited by Amos Altshuler and Ido Shemer in a series of papers (\cite{altshuler1974combinatorial,altshuler1984construction,altshuler1984enumeration}).  Unfortunately, because they generated a large superset of the class of polytopes, these algorithms had limited value in reasoning about combinatorial properties, as the properties in question do not hold on these broader entities.

The core idea behind our approach is as follows.  We prove that any $n$-facet polytope is the intersection of an $(n-1)$-facet polytope with an additional halfspace.  Consequently, we show that we can generate any $n$-facet polytopes by taking the appropriate intersection between a halfspace and a polytope with one fewer facet (``cutting'' the polytope). Similarly, we show that we can generate all $n$-facet polytopes by cutting $(n-1)$-facet polytopes with halfspaces in each possible way.  Because polytopes are considered combinatorially equivalent when their graphs are isomorphic, we show that we need not consider the infinite number of possible cuts for each polytope, but only the relatively small number of cuts which produce distinct graphs in the resulting polytope.  Finally, we determine how the graphs of polytopes are affected by cuts, allowing us to construct the graph of the $n$-facet polytope from the graph of an $(n-1)$-facet polytope.

\begin{centering}
\begin{figure}[h]
\center
\includegraphics[scale=.75]{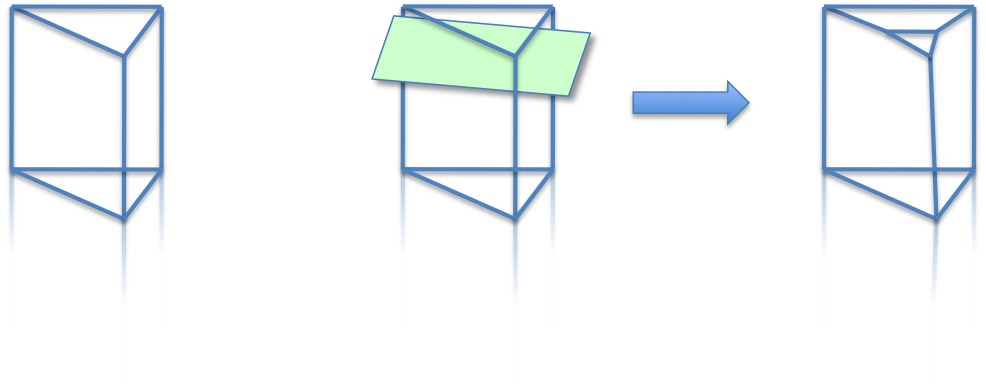}
\caption{Generation of an $n$-facet polytope from an $(n-1)$-facet polytope.}
\end{figure}
\end{centering}

\subsection{Proof of Completeness}
Define a \emph{polytope cut operation} as the intersection of a $d-$dimensional halfspace with an $n$-facet, $d$-dimensional polytope in $R^d$.  Let $H^{+}, \overline{H}$ and $H^-$ refer to a $d-$halfspace, its bounding hyperplane, and its complementary halfspace respectively.  Henceforth, $H$ will be used to mean either $H^{+}$ or $\overline{H}$, depending on if we refer to a halfspace or the hyperplane that bounds it.

Intersecting a halfspace $H$ with a polytope $P$ produces a new polytope $P'$. Although it is always possible to choose $H$ so that $P$ and $P'$ are combinatorially equivalent, because we are interested in polytopes without redundant constraints we discuss the cases where the two are combinatorially distinct.   Those vertices in $P'$ not present in $P$ are called \emph{new} vertices, and are generated by the intersection of $P$ and $\overline{H}$.  Those vertices shared by $P$ and $P'$ are called \emph{old} vertices.  Any vertices in $P$ not in $P'$ are \emph{removed} vertices.  The graph of removed vertices is called the cutset.  The set of new vertices constitutes a facet of $P'$ called the \emph{new} facet.

A facet $F$ of a polytope $P$, defined by halfspace $H_F$, is called \emph{projectively removable} if there exists a polytope $P'$ such that 1) $P$ and $P'$ are combinatorially isomorphic [ie, there exists a projective transformation $T$ such that $T(P)=P'$], and 2) $P'\ \backslash\ T(H_F)$ is a bounded set. 

We state the next lemma without proof, as it is drawn directly from a paper of Klee and Kleinschmidt \cite{klee}.
\begin{lemma}[Klee-Kleinschmidt]
Every simple polytope with $n>d+1$ facets has a projectively removable facet.
\end{lemma}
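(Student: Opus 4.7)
The plan is to follow a projective geometric approach: translate projective removability into the existence of a suitable hyperplane disjoint from $P$, then use the hypothesis $n > d+1$ together with simplicity of $P$ to ensure that such a hyperplane and a corresponding facet exist. I would begin by fixing any vertex $v$ of $P$. Because $P$ is simple, $v$ lies on exactly $d$ facets, so among the $n$ facets of $P$ there are $n - d \geq 2$ facets not incident to $v$; select one such facet $F$, with defining halfspace $H_F^+$ and bounding hyperplane $H_F$.

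Next I would construct a projective transformation $T$ adapted to $v$ and $F$. The idea is to choose a hyperplane $h$ that is strictly disjoint from $P$ and situated on the side of $F$ opposite to $v$; sending $h$ to infinity via $T$ yields a polytope $P' = T(P)$ combinatorially isomorphic to $P$ (since $h \cap P = \emptyset$), while the polyhedron $Q'_F := \bigcap_{F' \neq F} T(H_{F'}^+)$ formed by the remaining $n-1$ halfspaces becomes bounded, because $h$ ``cuts off'' the recession directions of the unbounded region that would otherwise extend beyond $F$. Simplicity at $v$ is used to guarantee that the $d$ halfspaces meeting at $v$, together with the other $n - d - 1$ halfspaces not associated with $F$, positively span $\mathbb{R}^d$ and therefore constrain the transformed polyhedron in every direction.

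The main obstacle will be verifying that a single projective transformation can simultaneously preserve the combinatorial type of $P$, keep each of the $n-1$ non-$F$ facets as a proper facet of $P'$, and produce a bounded result upon deleting the constraint associated with $F$. The threshold $n > d+1$ is essential here: for the $d$-simplex with $n = d+1$, deleting any facet leaves only $d$ halfspaces, which cannot bound a full $d$-dimensional region under any projective transformation. The additional facets guaranteed by $n > d+1$ are precisely what permit the recession cone of $Q_F$ to be collapsed projectively. The careful part of the argument is exhibiting $h$ that meets every extreme ray of $\mathrm{rec}(Q_F)$ while remaining strictly disjoint from $P$; this should follow from convexity and a compactness argument, noting that as $h$ is translated far from $P$ in the direction of $F$'s outward normal, every recession direction of $Q_F$ is eventually crossed.
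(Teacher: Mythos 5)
You should note first that the paper does not prove this lemma at all---it is quoted verbatim from Klee--Kleinschmidt---so your argument can only be judged on its own merits, and judged that way it has a directional error at its core. Write $Q_F=\bigcap_{F'\neq F}H_{F'}^{+}$ for the residual polyhedron. Every nonzero recession direction $u$ of $Q_F$ satisfies $\langle a_F,u\rangle>0$ (otherwise $u$ would be a recession direction of the bounded $P$), so a hyperplane $h$ placed beyond $F$ and arranged, as you require, to meet every extreme ray of $\mathrm{rec}(Q_F)$ necessarily meets $Q_F$ itself: $P\subset Q_F$ lies on the near side of $h$, a point far along any recession ray lies beyond it, and $Q_F$ is convex. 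That is fatal to the construction. Put the origin in the interior of $P$, write the facets as $\langle a_i,x\rangle\le b_i$ with $b_i>0$, and send $h=\{\langle c,x\rangle=\gamma\}$, $\gamma>0$, to infinity; the transformed facet inequalities are $\langle \gamma a_i-b_i c,\,y\rangle\le b_i$. If $w\in Q_F\cap h$, then $\gamma\langle a_i,w\rangle\le \gamma b_i=b_i\langle c,w\rangle$ for all $i\neq F$, so $w$ is a nonzero recession direction of the transformed residual polyhedron, which is therefore unbounded. A projective map never ``cuts off'' the part of $Q_F$ that reaches $h$; whatever touches the vanishing hyperplane becomes unbounded in the image. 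The hyperplane to be sent to infinity must miss $Q_F$ entirely---so it sits, roughly, behind $P$ relative to $F$, opposite to where $\mathrm{rec}(Q_F)$ points---and even that is not by itself sufficient, since boundedness of the image is equivalent to the remaining transformed normals positively spanning $R^d$.

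Your auxiliary use of simplicity is also not sound: the assertion that the $d$ normals at $v$ together with the other non-$F$ normals positively span $R^d$ is precisely the statement that $Q_F$ is already bounded in the given realization, which is false in general (remove one facet of a cube) and is exactly what the projective transformation has to arrange; the choice of $F$ as a facet missing some vertex $v$ plays no role in the rest of your argument. A short correct route is polarity. Let $P^{*}$ be the polar dual with respect to an interior point, a simplicial $d$-polytope whose vertices $v_i=a_i/b_i$ correspond to the facets of $P$. With the transformed inequalities above, the residual polyhedron of $T(P)$ is bounded precisely when $c'=c/\gamma$ lies in the interior of $\mathrm{conv}\{v_i:F_i\neq F\}$, and choosing such a $c'$ amounts to re-centering the polarity, an admissible projective transformation; hence $F$ is projectively removable iff this hull is full-dimensional. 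If it were not, the remaining $n-1\ge d+1$ vertices of $P^{*}$ would lie on a supporting hyperplane and hence on a common proper face, which cannot be a simplex, contradicting simpliciality. This proves the lemma---indeed in the stronger form that every facet of a simple polytope with $n\ge d+2$ facets is projectively removable---but the geometry is essentially opposite to the one you propose, and the gap in your sketch (where $h$ goes, and why boundedness follows) is exactly the content of the lemma.
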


The next theorem lies at the core of our algorithm.

\begin{theorem}\label{kleelemma}
Every combinatorial type of $(n,d)$-polytope can be generated through the intersection of a $d$-halfspace $H^+$ with some combinatorial $(n-1,d)$-polytope, called its $parent$ polytope, except the simplex.
\end{theorem}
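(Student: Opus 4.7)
The plan is to derive this theorem as an almost immediate corollary of the Klee--Kleinschmidt lemma just cited. Let $P$ be any simple $(n,d)$-polytope with $n>d+1$ (so that $P$ is not the simplex). I would argue that there is a combinatorially equivalent polytope $P'$ that can be written as $Q\cap H^+$, where $Q$ is an $(n-1,d)$-polytope and $H^+$ is a single halfspace; $Q$ is then the ``parent.''

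The key steps, in order, are as follows. First, apply the Klee--Kleinschmidt lemma to $P$ to select a projectively removable facet $F$ of $P$, defined by halfspace $H_F^+$, and let $T$ be the projective transformation guaranteed by projective removability. Set $P':=T(P)$, which is combinatorially isomorphic to $P$, with facets cut out by the halfspaces $T(H_i^+)$ for the $n$ facet-halfspaces $H_i^+$ of $P$. Second, let $Q$ be the polyhedron defined by intersecting the $n-1$ halfspaces $T(H_i^+)$ for $i$ ranging over the facets of $P$ other than $F$. By the definition of projective removability, removing the constraint $T(H_F^+)$ still leaves a bounded set, so $Q$ is a polytope. By construction $P'=Q\cap T(H_F^+)$, so $P$ is combinatorially the cut of $Q$ by the halfspace $T(H_F^+)$, as desired.

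The one step that needs a short justification---and which I expect to be the main obstacle---is showing that $Q$ is in fact an $(n-1,d)$-polytope in the sense of this paper, i.e., that all of the remaining $n-1$ halfspaces remain irredundant after deletion of $T(H_F^+)$. For each $i\neq F$, the facet $P'\cap T(\overline{H_i})$ of $P'$ has dimension $d-1$, and since $P'\subseteq Q$ we have $P'\cap T(\overline{H_i})\subseteq Q\cap T(\overline{H_i})\subseteq T(\overline{H_i})$. The outer set is $(d-1)$-dimensional, pinching $Q\cap T(\overline{H_i})$ to dimension exactly $d-1$, so each such hyperplane still exposes a genuine facet of $Q$. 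Since $Q$ is defined by only $n-1$ halfspaces, it has exactly $n-1$ facets, and it inherits simplicity from $P'$ because $Q$ agrees with $P'$ on a neighborhood of every vertex of $P'$ that does not lie on $T(\overline{H_F})$, and new vertices of $Q$ introduced ``beyond'' $T(\overline{H_F})$ are simple as long as the projective image is in general position, which can be arranged within the combinatorial equivalence class.

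Finally, the simplex is excluded because any $d$-simplex has exactly $d+1$ facets, and deleting any one of them from the defining inequalities produces an unbounded cone; hence the simplex cannot arise as the cut of any bounded $(d,d)$-polytope (indeed, no such polytope exists). Conversely, every polytope with $n>d+1$ falls under the Klee--Kleinschmidt hypothesis, so the dichotomy in the theorem is sharp.
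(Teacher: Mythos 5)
Your proposal follows essentially the same route as the paper's own proof: invoke the Klee--Kleinschmidt lemma, pass to the combinatorially equivalent polytope $P'$ in which the removable facet's halfspace can be dropped while keeping boundedness, and observe that the resulting $(n-1,d)$-polytope intersected with that halfspace recovers $P'$. Your explicit verification that the remaining $n-1$ constraints stay facet-defining (via the dimension-pinching argument) fills in a detail the paper asserts without justification, while your aside about simplicity of $Q$ is not needed for the statement as given; otherwise the two arguments coincide.
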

\begin{proof}

We simply apply the preceding lemma. Since every non-simplex polytope $P$ has a projectively removable facet $F$, then by the lemma there exists a combinatorially identical polytope $P'$ with facet $F'$ such that $(P' \backslash H_{F'})$ is a bounded set.  But if $(P' \backslash H_{F'})$ is bounded, then it is a polytope, and since exactly one hyperplane was removed this polytope must have exactly $n-1$ facets.  Thus $(P' \backslash H_{F'})$ is an $(n-1,d)$ polytope. Thus, we have that $P' = (P'  \backslash H_{F'}) \bigcap H_{F'}$, so that $P'$ is the intersection of an $(n-1,d)$ polytope and a $d$-halfspace.  Since $P$ and $P'$ have the same combinatorial type, we are done. 
\begin{flushright}
$\qed$
\end{flushright}
\end{proof}

Observe that the preceding claim is not true when we consider particular realizations of polytopes, rather than combinatorial types of polytopes.  For example, the regular cross-polytopes, such as a regular hypercube, cannot be generated via intersection of a halfspace with any combinatorially distinct polytope because each adjacent pair of facets is exactly orthogonal.  Thus, it should be clear that this method generates all combinatorially distinct graphs of polytopes, rather than any specific realizations of polytopes.

We observe that the following algorithm will generate the complete class of polytopes for a given dimension, proceeding inductively over the number of facets.

\begin{enumerate}
\item Generate all combinatorially nonisomorphic removable vertex sets $C=\{C_1,...,C_k\}$ for a given polytope's face lattice.
\item For each vertex set $C_i$, construct the face lattice of each new polytope $P_i'$ formed by separating $C_i$ from $P$ with a hyperplane.
\item Recurse on each new polytopal face lattice $P_i'$.
\end{enumerate}

\begin{centering}
\begin{figure}
\center
\includegraphics[scale=.45]{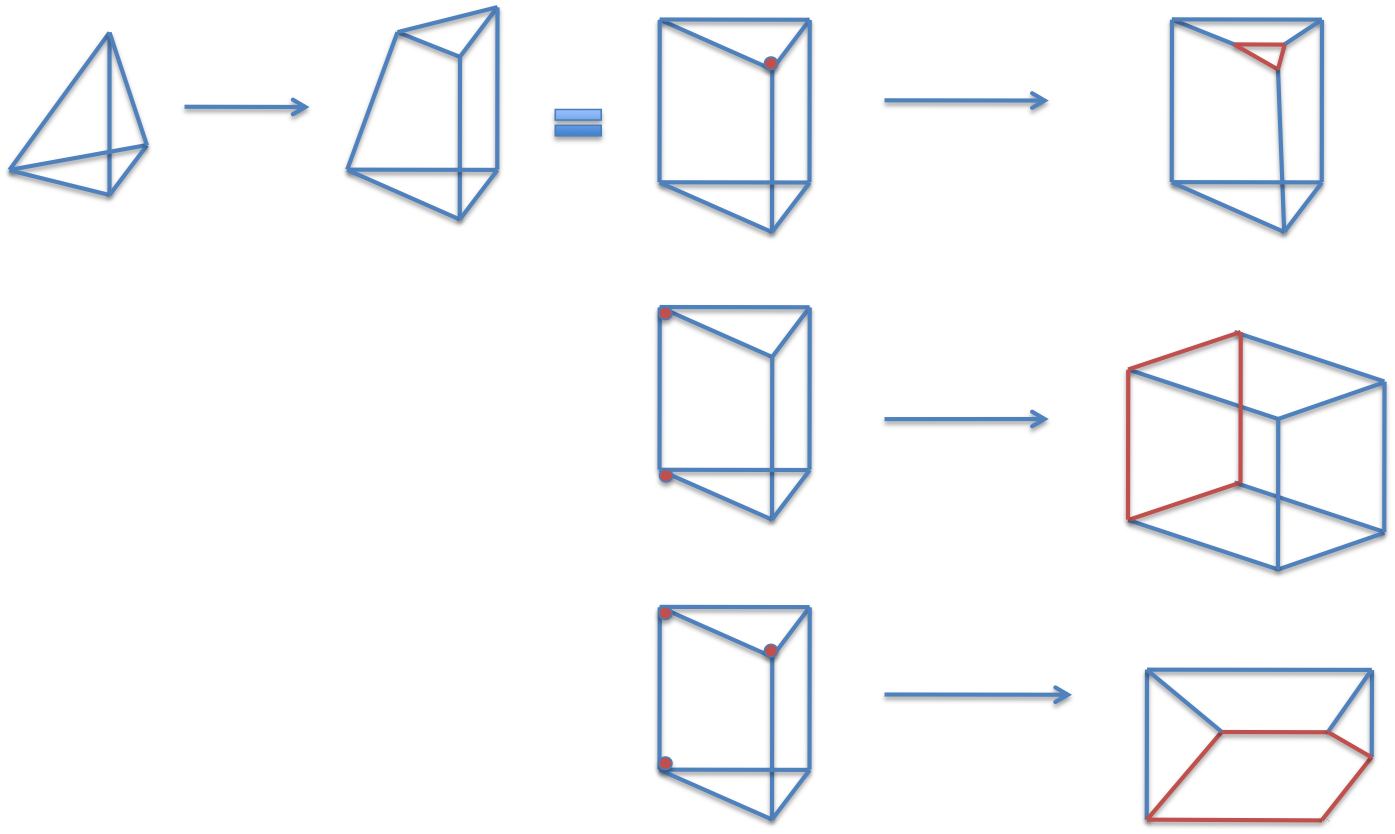}
\caption{Operation of the polytopal induction algorithm in three dimensions, up to $n=6$ facets.  Red dots indicate cutsets.  The top and bottom polytope are isomorphic.}
\end{figure}
\end{centering}

This infinite process may be implemented in the following algorithm, which terminates once all polytopes of a given facet count have been constructed. \\\\


\begin{algorithm}[H]
\caption{Algorithm to inductively generate all $n$-facet $d$-polytopes.}

\SetLine
Inputs: maximum facet count n, dimension d, with $n>d$\\
Output: list of all polytopes \\
Initialize P as the $d$-simplex\\
 \FuncSty{MakeAllChildPolytopes}(n,d,P) \Begin{
	\If { numFacets(P) = n } { return (P)}
	\Else {
	$C$ = GenerateAllRemovableCutsets($P$)\\
	\For{each removable cutset $C_i\in C$ }{
		let P' = CutPolytope($P$,$C_i$) \\
		polytopelist = polytopelist $\cup$ MakeAllChildPolytopes(n,d,P')
	}}
	{return polytopelist}
}
\label{enumerator}
\end{algorithm}\\\\

The runtime of this algorithm depends on the complexity of finding removable cutsets and the CutPolytope algorithm.  In the next two sections, we discuss each of these problems in detail.

\section{Generating cutsets}
Observe that at worst, we could consider every possible subset of the vertices of $P$ as a possible cutset.  However, this approach is both practically infeasible and theoretically unhelpful; it is evident that some subsets of vertices cannot be truncated by a common hyperplane without truncating additional vertices (for example, ones on opposite corners of a hypercube), so many of the resulting graphs would not correspond to any real polytope. This would provide limited utility in reasoning about graph-theoretic properties of polytopes.  Instead, in this section we establish necessary and possibly sufficient conditions for a subset $C$ of vertices of $G_P$ to form a cutset that can be realized by a hyperplane.  We then present an algorithm for generating these cutsets for a given polytopal graph.

Let $G_P$ be a polytopal graph.  A subset of vertices $C\subset G_P$ is a \emph{realizable cutset} if there exists a polytope $P$ with graph $G_P$ such that $C$ can be separated from $P$ by a hyperplane, generating a $n+1$-facet polytope $P'$.  Formally, $C\subset G_P$ is a realizable cutset if there exists a polytope $P'$ with graph $G_P$ and a halfspace $H$ such that for each vertex v of $P$ such that $v \in C$, $v\in H^-$, and for each vertex $v$ of $P$ such that $v \notin C$, $v \in H^+$.

We now provide three conditions that allow us to identify which vertices in a polytopal graph constitute a realizable cutset.

\begin{theorem}\label{props}
All realizable cutsets satisfy the following three properties.
\begin{enumerate}
 \item Connectedness: In $G_P$, every vertex in the cutset is connected to some other vertex in the cutset.
 \item Complementary connectedness: If removed, the cutset does not disconnect $G_P$.\footnote{Subsequent work on our part has shown the stronger condition that no subgraph corresponding to a face may be disconnected by the cutset.  This may be easily verified by a convexity argument.}
 \item Facet-free: The vertices of the cutset do not contain any facet of $P$.

\end{enumerate}
\end{theorem}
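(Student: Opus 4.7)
The plan is to reduce all three properties to a single structural fact about polytope graphs and then to invoke it on two different polytopes arising from the cut: the ``removed piece'' $Q := P \cap H^-$ and the cut result $P' := P \cap H^+$. The structural fact I will use is a corollary of Balinski's theorem: for any polytope $R$ and any facet $F$ of $R$, the subgraph of $G_R$ induced on $V(R) \setminus V(F)$ is connected. I would justify this by applying a projective transformation sending $F$ to infinity and then connecting any two bounded vertices of the transformed polyhedron via simplex-style edge pivots on a linear objective that is bounded on the transformed polyhedron; alternatively I would cite it from Ziegler's book.

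For property (1), I would apply the fact to $Q$. The vertex set of $Q$ is $C \cup N$, where $N$ are the new vertices lying on $\overline{H}$, and these $N$ are exactly the vertices of the facet $F_{\text{new}} := Q \cap \overline{H}$ of $Q$. The lemma yields that $C = V(Q) \setminus N$ induces a connected subgraph of $G_Q$. A short side argument shows the induced subgraph on $C$ is the same in $G_Q$ as in $G_P$: any edge of $P$ between two vertices of $C$ lies in $H^-$ by convexity and survives as an edge of $Q$, and conversely any edge of $Q$ between two vertices of $C$ does not lie on $F_{\text{new}}$ and is therefore supported by $d-1$ common facets of $P$, hence is an edge of $P$. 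So $C$ is connected in $G_P$, which in particular implies every vertex of $C$ (when $|C|>1$) has a neighbor in $C$.

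Property (2) follows by the symmetric argument on $P'$. Now $N$ is the vertex set of the new facet of $P'$, and the lemma applied to $P'$ and its new facet gives that $V(P') \setminus N = V(P) \setminus C$ induces a connected subgraph of $G_{P'}$. The same edge-comparison argument shows this induced subgraph coincides with $G_P$ restricted to $V(P) \setminus C$, and therefore $G_P - C$ is connected.

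Property (3) I would handle by contradiction. If $V(F) \subseteq C$ for some facet $F$ of $P$ with supporting hyperplane $H_F$, then $F = \operatorname{conv}(V(F)) \subseteq H^-$, so $H_F \cap P' = (H_F \cap P) \cap H^+ = F \cap H^+ = \emptyset$; hence $H_F$ no longer supports $P'$ and its constraint is redundant. But then $P'$ has only $n$ irredundant facets (the new one effectively replacing $F$) rather than the $n+1$ required of a genuine cut result, contradiction. The main obstacle is the first step: I need the ``complement of a facet is connected'' lemma in enough generality to cover both $Q$ and $P'$, and a secondary nuisance is checking carefully that $Q$ is itself a polytope and that its $1$-skeleton correctly transports cutset edges to and from $G_P$; once both are in hand each of the three properties is a short corollary.
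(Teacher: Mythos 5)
Your proposal is correct, but it proves the connectivity statements by a genuinely different (and more careful) mechanism than the paper does. The paper's own proof of properties (1) and (2) is purely topological: it observes that $P\cap H^{-}$ and $P\cap H^{+}$ are convex, hence connected as point sets, and from this directly asserts that the portions of $G_P$ lying on either side of $\overline{H}$ are connected subgraphs. That inference from set-connectivity of a convex region to connectivity of the \emph{induced subgraph} on the vertices it contains is exactly the step your Balinski-type lemma supplies: you apply ``the graph of a polytope minus the vertices of a facet is connected'' to the two polytopes $Q=P\cap H^{-}$ and $P'=P\cap H^{+}$, whose new vertices form the facet on $\overline{H}$, and then you explicitly check that edges between old vertices are the same in $G_Q$ (resp.\ $G_{P'}$) as in $G_P$. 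So your route costs an auxiliary lemma (standard, provable by monotone simplex paths or by sending the facet to infinity, and citable from Ziegler) plus the edge-transport check, but in exchange it closes a gap the paper leaves implicit, and it yields the slightly stronger conclusion that the cutset itself induces a connected subgraph, which is clearly the intended reading of property (1). For property (3) you and the paper argue essentially identically --- a facet wholly inside the cut-off halfspace becomes a redundant constraint, so $P'$ could not have $n+1$ facets --- with your version merely making the redundancy explicit. One small point to keep in mind: the strict/non-strict placement of vertices relative to $\overline{H}$ (so that no vertex of $P$ lies on the cutting hyperplane, and $F\cap H^{+}=\emptyset$ in your argument for (3)) should be stated as part of the genericity of the cut, as both your argument and the paper's tacitly assume it.
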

\begin{proof}
Suppose a cutset $C$ is realizable.  We will show that the three properties must hold:
\begin{enumerate}
\item Both $H^+$ and $P$ are convex sets. The intersection of two convex sets must be convex, and therefore, connected, so $H^+ \cap P$ is connected and convex.  Thus, the portion of the edge-vertex graph of $P$ lying in $H^+$ must be connected.  This is simply the cutset.

\item $H^-$ and $P$ are convex sets, so their intersection must be convex, and therefore connected.  Thus, the portion of the edge-vertex graph of $P$ falling in $H^-$ must be connected in $G_P$.  This is the complement of the cutset.  Thus the complement of the cutset must be connected, and therefore the cutset cannot disconnect the graph of P.

\item By definition, $P'$ has exactly one more facet than P.  If any facet of $P$ were contained in the cutset, the number of facets would decrease.  Hence no facets can be contained in the cutset.
\end{enumerate}
$\qed$
\end{proof}
Theorem \ref{props} establishes that by enumerating and cutting away only those cutsets which satisfy these conditions, we will generate a superset of all polytopal graphs.  However, it is also conceivable that some non-polytopal graphs will also be generated during this process.  The following conjecture suggests that this is not the case; ie, that these three conditions suffice to guarantee that the algorithm generates only polytopal graphs.

\begin{center}
\begin{figure}[h]
\center
\includegraphics[scale=.65]{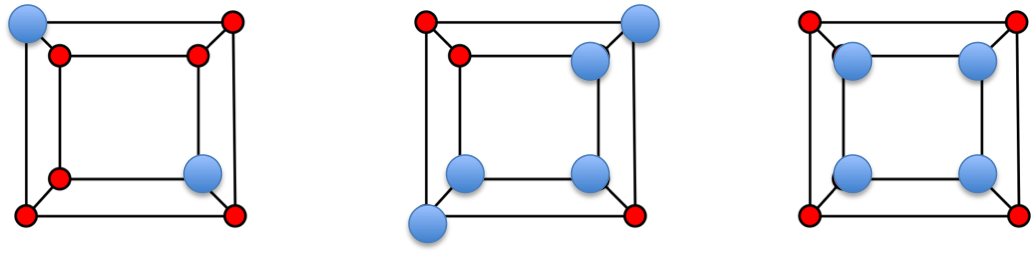}
\caption{Examples of violations of each of the three necessary conditions in theorem \ref{props}.  The large dots indicate sets of vertices that cannot be cut from any instance of the polytope by a hyperplane.}
\end{figure}
\end{center}

\begin{theorem}[Generalized Steinitz Theorem]
For $d\geq 4$, a simple face lattice is \emph{d-polytopal} if and only if it may be embedded as a polytopal subdivision of one of its facets in $R^{d-1}$.  For $d=3$, the lattice must also be $d$-connected.
\end{theorem}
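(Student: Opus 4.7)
The plan is to prove both directions via the Schlegel diagram construction, extending classical Steinitz from dimension three to arbitrary $d$, proceeding by induction on $d$ with $d=3$ as the base case (the classical Steinitz theorem).

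For the forward direction, given a $d$-polytope $P \subset R^d$ whose face lattice $\mathcal{L}$ is simple, I would pick any facet $F$ and a viewpoint $q \in R^d$ just beyond $F$, that is, on the opposite side of $\overline{H}_F$ from the interior of $P$ and close enough to $F$ that $q$ ``sees'' $F$ but no other facet. Central projection of $\partial P \setminus F$ from $q$ onto the affine hull of $F$ yields the Schlegel diagram $D(P,F)$, which is a polytopal subdivision of $F$ in $R^{d-1}$ combinatorially isomorphic to $\mathcal{L}$ minus the top face. For $d=3$, the $3$-connectedness claim follows immediately from Balinski's theorem applied to $P$.

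For the reverse direction, given a simple face lattice $\mathcal{L}$ embedded as a polytopal subdivision $\mathcal{S}$ of a $(d-1)$-polytope $F \subset R^{d-1}$, I would realize $\mathcal{L}$ as a $d$-polytope in $R^d$ by lifting $\mathcal{S}$ vertically: embed $F$ as $F \times \{0\} \subset R^{d-1} \times R$, assign heights $h(v) \geq 0$ to each vertex of $\mathcal{S}$ with $h(v)=0$ for $v \in \partial F$, and form $P = \operatorname{conv}\{(v,h(v)) : v \in \operatorname{vert}(\mathcal{S})\}$. One then shows that the upper boundary of $P$ projects back onto $\mathcal{S}$ if and only if the heights satisfy, for every pair of cells $C_1, C_2$ of $\mathcal{S}$ sharing a ridge $r$, a strict-convexity inequality forcing the two lifted affine hulls to meet along the lift of $r$ at a reflex angle in $R^d$. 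This is a finite linear feasibility problem in the variables $h(v)$. Combined with $3$-connectedness, this is precisely the non-trivial direction of Steinitz at $d=3$.

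The main obstacle is producing the height function for $d \geq 4$: it is well known that not every polytopal subdivision is regular (i.e., admits a convex lifting), so the argument has to exploit the specific structure of lattices $\mathcal{L}$ arising from the cut-and-sweep algorithm of Section~2. My plan for this step would be to induct on $d$: the inductive hypothesis gives that each cell $C$ of $\mathcal{S}$ is itself a realizable $(d-1)$-polytope, so each cell admits a local lift; the remaining work is to glue the local lifts together consistently around every $(d-3)$-face of $F$, which becomes a cocycle condition on the dual graph of $\mathcal{S}$. Showing that this cocycle is always a coboundary when $\mathcal{L}$ is simple and arises from Theorem~\ref{kleelemma} is where I expect the bulk of the genuine difficulty to lie, and it is the step where the simplicity hypothesis (rather than mere polytopality of $\mathcal{S}$) becomes essential.
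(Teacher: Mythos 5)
There is a genuine gap, and it sits exactly where you say you expect it to: the reverse direction. Your forward direction (Schlegel projection from a point beyond a facet, plus Balinski for the $d=3$ connectivity claim) is fine and standard. But the converse --- recovering a $d$-polytope from a polytopal subdivision of a $(d-1)$-polytope --- is precisely the hard content of the statement, and your proposal does not supply it. Lifting the subdivision requires it to be regular, and, as you note yourself, polytopal subdivisions need not be regular; worse, for $d\geq 4$ there exist $(d-1)$-diagrams that are not Schlegel diagrams of any polytope, so the implication cannot follow from a generic lifting argument and must lean on strong extra hypotheses. Your proposed fix --- an induction on $d$ in which local lifts of the cells are glued via a cocycle condition on the dual graph, with the claim that simplicity forces the cocycle to be a coboundary --- is stated as a plan, not an argument: no reason is given why simplicity kills the obstruction, and this is exactly the step that would constitute a proof. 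Moreover, your suggestion to exploit ``the specific structure of lattices arising from the cut-and-sweep algorithm'' would, even if carried out, prove a weaker statement than the theorem as written, which quantifies over all simple face lattices embeddable as a subdivision of a facet, not only those produced by the paper's generator.

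For comparison: the paper does not prove this theorem at all. It observes that for $d=3$ it is Steinitz' theorem and asserts that for $d\geq 4$ it follows from a result of Rybnikov on $d$-diagrams, i.e., the entire burden of your ``cocycle'' step is delegated to that cited result. So your proposal is not an alternative proof of the statement; it is an outline whose missing middle coincides with the external theorem the paper invokes. If you want to pursue your route, the concrete task is to prove (or extract from Rybnikov's work) that every \emph{simple} $(d-1)$-diagram is polytopal for $d\geq 4$, and to be explicit about why simplicity --- rather than regularity of the subdivision --- is the hypothesis doing the work.
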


The preceding statement is equivalent to Steinitz' theorem in three dimensions and follows from a recent result of Rybnikov about $d$-diagrams for $d\geq 4$ \cite{rybnikov}.  The theorem may be rephrased by saying that every simple $d$-connected polytopal subdivision of $R^{d-1}$ is $d$-polytopal.

\begin{lemma}
Any $k$-face of a polytope is removable.
\end{lemma}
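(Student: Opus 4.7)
The plan is to exhibit, for any $k$-face $F$ of $P$, a hyperplane that strictly separates $V(F)$ from $V(P)\setminus V(F)$; this shows directly that $V(F)$ is a realizable cutset in the sense of Section 3, and hence that the cut-off operation produces a valid $(n+1,d)$-polytope.

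First, I would invoke the definition of a face. There is a supporting hyperplane $h$ of $P$ with $F = h\cap P$, and $P$ lies in one closed halfspace $H^-$ bounded by $h$. Because $F = h\cap P$, the vertices of $F$ are precisely the vertices of $P$ lying on $h$; every other vertex of $P$ lies in the open halfspace $\operatorname{int}(H^-)$.

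Next, I would perturb $h$ inward. Since $V(P)$ is finite, $\delta := \min_{v\in V(P)\setminus V(F)} \operatorname{dist}(v,h) > 0$. Choosing any $\epsilon$ with $0 < \epsilon < \delta$ and translating $h$ by $\epsilon$ along the inward normal produces a hyperplane $h'$ such that $V(F)$ lies strictly on one side of $h'$ (the outer side, away from the bulk of $P$) and $V(P)\setminus V(F)$ lies strictly on the other. Intersecting $P$ with the appropriate closed halfspace of $h'$ then cuts off exactly $V(F)$, and since $h'$ meets the interior of $P$ it contributes precisely one new facet.

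The geometric construction is the straightforward part. The main subtlety I foresee is the degenerate case $k = d-1$, where $F$ is itself a facet: cutting off $V(F)$ deletes the facet $F$ (violating the facet-free condition of Theorem \ref{props}) and leaves the facet count unchanged rather than increasing it by one. I therefore expect the statement to be understood for $k \leq d-2$, i.e.\ for proper sub-facet faces. The remaining two conditions of Theorem \ref{props} are then easy to check: $V(F)$ induces the edge-graph of the polytope $F$, which is connected, and a standard convexity argument (applied to the fact that $P\cap \operatorname{int}(H'^-)$ is convex and contains every vertex outside $V(F)$) shows $V(P)\setminus V(F)$ remains connected in $G_P$.
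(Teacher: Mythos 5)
The paper states this lemma without proof, so there is no reference argument to compare against; you have essentially had to supply the missing argument. Your supporting-hyperplane-perturbation approach is the natural one and is sound: the minimum distance $\delta$ is positive because $V(P)\setminus V(F)$ is finite and disjoint from $h$, and an inward translation by any $\epsilon\in(0,\delta)$ strictly separates $V(F)$ from the remaining vertices, exhibiting $V(F)$ directly as a realizable cutset in the sense of Section~3. Your flag of the degenerate case $k=d-1$ is an important catch that the paper glosses over: when $F$ is itself a facet, cutting off $V(F)$ deletes that facet rather than adding one, and condition~3 of Theorem~\ref{props} is violated, so the lemma as stated should be read with $k\leq d-2$. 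Two small points deserve an explicit line in a full writeup. First, to conclude that the cut adds \emph{exactly} one facet you must also argue that no original facet of $P$ becomes redundant; this holds for $\epsilon$ small enough because every facet $G$ has a relative interior point off $h$ (if $G\subseteq h$ then $G\subseteq h\cap P=F$, contradicting $\dim G=d-1>k$), so $G\cap H'^{-}$ remains $(d-1)$-dimensional. Second, the ``standard convexity argument'' for complementary connectedness is really a monotone-path argument along the linear functional normal to $h$: from any two surviving vertices one walks monotonically toward the vertex minimizing that functional, and the resulting paths stay inside $H'^{-}$. Neither point affects the correctness of your argument.
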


\begin{theorem}
For $d=3$, every cutset satisfying properties 1 and 2 of Proposition \ref{props} is realizable.
\end{theorem}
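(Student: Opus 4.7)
The plan is to apply the Generalized Steinitz Theorem to the graph obtained by carrying out the cut combinatorially. By Steinitz' theorem (the $d=3$ case of the Generalized Steinitz Theorem above), $G_P$ is planar and $3$-connected, so I would first fix a planar embedding of $G_P$. Properties~1 and~2 of Theorem~\ref{props} assert that both $C$ and its complement $\overline{C}$ induce connected subgraphs of $G_P$, and so in this embedding they occupy two connected, simply connected regions separated by the edges that cross between them.

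From this data I would construct a candidate post-cut graph $G_{P'}$ by deleting the subgraph induced by $C$, subdividing each edge from $C$ to $\overline{C}$ by a new vertex, and joining these new vertices into a cycle $C^{*}$ in the cyclic order dictated by the planar embedding. This cycle will play the role of the boundary of the new facet; the construction is manifestly planar, and every vertex on $C^{*}$ has degree three, with two neighbors on $C^{*}$ and one in $\overline{C}$ (the endpoint of its subdivided edge). A necessary sub-lemma is that the new vertices really do form a single cycle rather than several disjoint cycles; this follows because properties~1 and~2 together force both regions to be simply connected in the spherical embedding of the planar graph.

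The hard part will be showing that $G_{P'}$ is $3$-connected, since once that is established the Generalized Steinitz Theorem immediately produces a $3$-polytope $P'$ realizing $G_{P'}$ with exactly $n+1$ facets. I would argue $3$-connectedness by contradiction: given a hypothetical separating pair $\{u,v\}$, case-split on whether each of $u,v$ lies in $\overline{C}$ or on $C^{*}$. If both lie in $\overline{C}$, any path in $G_P - \{u,v\}$ that passes through $C$ can be rerouted along an arc of $C^{*}$, keeping $G_{P'} - \{u,v\}$ connected. If one or both lie on $C^{*}$, the cycle structure of $C^{*}$ and the fact that $\overline{C}$ stays connected after removing at most one of its vertices reduce the situation to a $2$-separator of $G_P$, contradicting its $3$-connectedness.

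Finally, to obtain a geometric realization rather than a merely combinatorial cut, I would invoke a projective argument in the spirit of Theorem~\ref{kleelemma}: starting from the realization of $P'$ produced by Steinitz, the new facet $C^{*}$ is projectively removable, so a projective transformation sending this facet's hyperplane to infinity yields a realization of $P$ in which the original new-facet hyperplane, restored from infinity, meets $P$ precisely to cut off $C$ and produce $P'$. This exhibits the required polytope $P$ and halfspace $H^{+}$, certifying that $C$ is realizable.
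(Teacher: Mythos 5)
Your proposal follows essentially the same route as the paper's proof: both invoke Steinitz' theorem to realize $G_P$ as a 3-connected planar graph, build $G_{P'}$ by deleting $C$, subdividing each $C$--$\overline{C}$ edge with a new vertex, and joining the new vertices into a cycle bounding the new facet (you via the cyclic order of crossings in a fixed planar embedding, the paper via a separating Jordan curve together with F\'ary's theorem --- equivalent devices), and then verify planarity and 3-connectedness so that Steinitz' theorem in the other direction produces the polytope. Your closing projective-transformation step, which converts a Steinitz realization of $G_{P'}$ into a certificate that $C$ is a realizable cutset of a realization of $P$ (which is what ``realizable'' literally requires), is a useful addition that the paper leaves implicit.

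There is, however, a genuine error in your 3-connectedness case analysis. In the mixed case you invoke ``the fact that $\overline{C}$ stays connected after removing at most one of its vertices,'' but property~2 only guarantees that $\overline{C}$ is connected, not 2-connected; if $\overline{C}$ has a cut vertex, deleting it disconnects $\overline{C}$. The desired conclusion is still attainable --- the surviving arc of $C^{*}$ (the cycle minus the one deleted new vertex), together with the subdivided $C$--$\overline{C}$ edges, can bridge the components of $\overline{C}\setminus\{v\}$, and one would then exhibit a 2-separator of $G_P$ to reach a contradiction --- but the premise you state is false and the reduction does not follow as written. (For what it is worth, the paper's handling of the same case, which asserts without justification that removing an old vertex from $G'$ leaves a 2-connected graph, is similarly loose; you should regard this step as requiring real work in either version.)
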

\begin{proof}
In $d=3$, every polytope may be drawn as a $d$-connected planar graph by Steinitz' theorem.  Because the cutset $C$ is connected, it may be separated from the remainder of the graph $\overline{C}$ by a continuous closed non-self-intersecting planar loop $L$ passing through the edges $E=\{(x,y):x\in C, y\in \overline{C} \}$.  We construct a new graph as follows: draw a new vertex $v'$ at each intersection between $L$ and $E$, so that $L$ is divided into a set of line segments $l_1,\cdots,l_{|E|-1}$ and  each edge $e\in E$ is split into two edges incident to $v'$.  Now delete the vertices contained in $C$ along with all of their incident edges.  Since none of the new edges $l_i$ intersect any other edges except at a vertex and since our initial graph $G$ was planar, the resulting graph $G'$ is also planar. By Fary's theorem, the curved line segments $l_i$ may be drawn as straight lines in an isomorphic graph, yielding a polytopal sbdivision.

In order to show that $G'$ is 3-connected, remove any pair of vertices $(u,v)$ in $G'$ along with their incident edges.  If both $u$ and $v$ are newly formed vertices, then since the removal of $C$ does not disconnect $G'$, the removal of any subset of the new vertices cannot disconnect $G'$.  If neither $u$ nor $v$ are new vertices, then since $G$ is 3-connected, removing $u$ and $v$ cannot disconnect $G$ and therefore cannot disconnect $G'$.   Last, if $u$ is old and $v$ is new, we observe that since $G$ is 3-connected removing $u$ from either $G'$ or $G$ leaves a graph that is 2-connected.  Hence $G'$ is 3-connected and planar, and so by Steinitz' theorem it is realizable as a polytope in $R^3$. \begin{flushright} $\qed$ \end{flushright}



\end{proof}







\begin{conjecture}
For $d\geq 3$, every cutset satisfying properties 1 and 2 of Proposition \ref{props} is realizable.
\end{conjecture}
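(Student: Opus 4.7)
The plan is to extend the $d=3$ argument (Theorem 3.2) to arbitrary $d\geq 4$ by replacing Steinitz' theorem with the Generalized Steinitz Theorem stated above. Given a simple $d$-polytope $P$ and a cutset $C$ satisfying properties 1 and 2, I would construct a polytopal subdivision of $R^{d-1}$ realizing the face lattice of the cut polytope $P'$; by the Generalized Steinitz Theorem, such a construction establishes realizability.

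First I would fix a facet $F$ of $P$ disjoint from $C$ and work with the Schlegel diagram of $P$ based on $F$, viewed as a simple polytopal subdivision $\mathcal{S}$ of $F\subset R^{d-1}$. I would then seek a $(d-2)$-hyperplane $H'\subset R^{d-1}$ that separates the vertices of $\mathcal{S}$ corresponding to $C$ from those corresponding to $\overline{C}$, such that the subdivision obtained by intersecting $\mathcal{S}$ with $H'$ is still simple and polytopal. Just as the $d=3$ proof used Jordan's curve theorem and planarity to produce a separating closed loop, the higher-dimensional analogue would proceed inductively: apply the $(d-1)$-dimensional case of the conjecture to the induced cutset on each facet of $F$ meeting both $C$ and $\overline{C}$, and glue the resulting $(d-3)$-hyperplanes into a consistent $(d-2)$-hyperplane $H'$ in $R^{d-1}$. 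Simplicity at each newly created vertex follows because it lies on exactly $d-1$ old facets together with the cutting hyperplane, and polytopality of each cell is preserved by hyperplane intersection.

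The main obstacle is precisely the separability step. Connectedness of $C$ and of $\overline{C}$ in the edge-vertex graph does not automatically imply that the corresponding point sets can be linearly separated in a given realization of $P$; some projective flexibility, analogous to that used in Klee-Kleinschmidt, must be exploited. A promising strengthening is to first show that under the stronger ``no face-subgraph is disconnected'' version of property 2 mentioned in the footnote, one can always choose a realization of $P$ in which the convex hulls of $C$ and $\overline{C}$ are disjoint; once such geometric separation holds, any supporting hyperplane of the two hulls furnishes $H'$. Formalizing the passage from combinatorial connectedness conditions to convex separability, perhaps via oriented matroids or Gale diagrams, is where the bulk of any full proof is likely to lie, and it is plausible that a counterexample in dimension $4$ or higher would instead require augmenting properties 1 and 2 with a convexity-type condition not visible at the graph level.
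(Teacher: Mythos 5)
Your proposal takes essentially the same route as the second of the two proof sketches the paper gives: invoke the Generalized Steinitz Theorem and try to realize the cut face lattice as a polytopal subdivision of a facet in $R^{d-1}$. You add concrete scaffolding that the paper omits (Schlegel diagram based on a facet disjoint from $C$, inductive gluing of separating $(d-3)$-hyperplanes across the facets of $F$), and you correctly flag the place where both sketches stall: combinatorial connectedness of $C$ and $\overline{C}$ does not by itself imply linear separability of the corresponding vertex sets in any given realization, and even the paper's first sketch (parametrizing projective transformations and solving a feasibility program) never establishes that the program is always feasible. You also correctly anticipate the footnoted strengthening (``no face-subgraph disconnected''), which the paper's authors themselves later found necessary, and you are right to raise the possibility of a counterexample in $d\geq 4$ unless the hypotheses are strengthened --- the paper is silent on this risk.

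One technical wrinkle worth naming: cutting $P$ by a hyperplane $H$ in $R^d$ does not project under the Schlegel map to a linear section of $\mathcal{S}$ by a hyperplane $H'$ in $R^{d-1}$; the image of $P\cap \overline{H}$ in the diagram is generally curved. For the ``if'' direction of the Generalized Steinitz Theorem you do not strictly need the cut to come from a Schlegel projection, only that the resulting subdivision be polytopal, but you then must verify separately (not merely assert) that the cell complex you obtain after slicing $\mathcal{S}$ by $H'$, deleting the $C$-side cells, and declaring $F\cap (H')^{-}$ the new facet, reproduces exactly the face lattice dictated by the Face Intersection Lemma. That verification, together with the existence of a suitable $H'$, is unaddressed both in your sketch and in the paper's; it is the same gap you identify, viewed from a slightly different angle. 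Since the paper itself labels its argument a sketch and leaves the statement as a conjecture, your proposal is comparable in rigor, and arguably more honest about what remains to be proved.
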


\begin{proof}

We will sketch two possible proofs, but omit the details.  Suppose the three conditions are satisfied, and construct the edge-vertex graph of $P$ and the edge-vertex graph of $P'$, according to the rules above.  It is sufficient to give a polytopal embedding of the graph $G_{P'}$ in $R^d$.  There are several possible approaches to solving this problem.  

First, we can assume that we have a realization of $P$ available, and identify an appropriate projective transformation that converts $P$ into $P'$ so that a cut is possible.  This can be achieved by constructing the appropriate linear separability problem of finding a hyperplane separating $C$ from $P$.  Rather than fully specifying the coordinates of each vertex in this problem, we begin with the fixed coordinates of $P$ and add open parameters representing a projective transformation of $P$.  The resulting problem is a convex program.  Last, we would need to show that when these three conditions are met, this convex program always has a feasible solution.

As an alternative, we can make use of the Generalized Steinitz Theorem, which gives exact conditions for recognizing when a face lattice (and by extension, a simple graph) is polytopal.  We can then prove the main conjecture by showing that when all of the three conditions are met, the appropriate embedding in $d-1$-space is preserved.

\begin{flushright}
$\qed$
\end{flushright}
\end{proof}


As a practical matter, generating a complete list of combinatorially nonisomorphic cutsets for a given polytope satisfying these three conditions is readily achieved by the following procedure, which can be suitably optimized to eliminate redundancies:

\begin{enumerate}
\item Initialize $V$ as the set of all rotationally nonisomorphic vertices of $P$, computed, for instance, via an automorphism check \cite{nauty}.  In particular, let each vertex be contained in a singleton set, so that $V=\{\{v_1\},...,\{v_k\}\}$
\item For each set $U\in V$: 
\begin{itemize}
\item Check if $U$ violates any of the three conditions. If so, skip to the next $U$.  If $V$ is exhausted, terminate.
\item Add $U$ to the list of cutsets.
\item Identify each nonisomorphic vertex neighbor $u$ of $U$.
\item For each such $u$, recursively run step 2 on the set $\{U \cup u\}$.
\end{itemize}
\end{enumerate}


\section{Cut algorithm}

In this section, we discuss how to construct the graph $G_{P'}$  and face lattice of a polytope resulting from separating a cutset $C$ from a polytope $P$ using a hyperplane $H$.  While we restrict our discussion here to simple polytopes, it is apparent how the algorithm can be extended to non-simple polytopes.

Observe that all vertices of $P$ not in the cutset are present in $P'$ and that all vertices in the cutset are not present in $P'$.  Additionally, the polytope $P'$ necessarily contains several new vertices generated by the intersection of $H$ with $P$. Since a vertex is a $0$-dimensional face of a polytope, it is generated by the intersection of exactly $d$ hyperplanes in a polytope.  Equivalently, it is generated at the intersection of a hyperplane with a $1$-dimensional face.  Consequently, a new vertex appears at every intersection of $H$ with an edge of $P$.  This occurs exactly along the edges with one vertex in $C$ and one vertex in $P\ \backslash C\ $.  Let this number be denoted (as in the definition of edge expansion) as $\delta(C)$.  Then, the intersection of $H$ with $P$ generates exactly $\delta(C)$ new vertices.  The new vertices form a new facet of $P'$; as such, we must add edges between the new vertices corresponding to faces of the new facet.  Finally, that the portions of $G_P$ that are not adjacent to members of $C$ are unaffected by the cut operation, since do not come in contact with the hyperplane $H$.

Thus, the procedure for constructing child graphs can be understood at a high level as follows.  Given a graph $G_P$ and a cutset $C$, to generate $G_{P'}$, perform the following steps on $G_P$:
\begin{enumerate}
\item{Create new vertices on each edge between $C$ and $G_P \backslash C$, forming a new facet}
\item{Delete vertices of the cutset and all incident edges}
\item{Add new edges between new vertices reflecting higher-dimensional faces of the new facet}.
\end{enumerate}

\begin{centering}
\begin{figure}[h]
\center
\includegraphics[scale=.60]{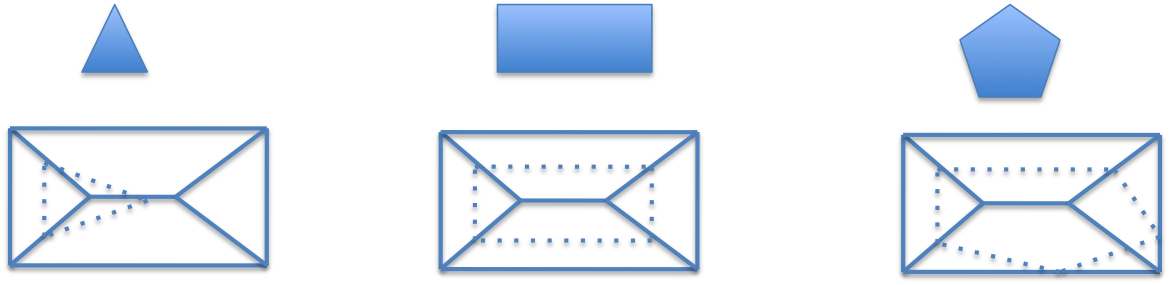}
\caption{New facets constructed by removing various cutsets from the $d=3$ simplicial prism.}
\end{figure}
\end{centering}

The details of step 3 require further development using the following lemma. 

\begin{lemma}\label{k2} (Face intersection lemma)
Define the parent face $f \in P$ of any face $f'\in P'$ as the face $f$ such that $f'=H\cap f$. New $k$-faces $u'$ and $v'$ lie on a common $k+1$-face in the new polytope if and only if their parent $(k+1)$-faces $u$ and $v$ lie on a common $k+2$-face in $P$.
\end{lemma}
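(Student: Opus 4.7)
The plan is to exploit the clean correspondence between faces and facet-subsets available in simple polytopes. In a simple $d$-polytope, each $k$-face is the intersection of a unique collection of exactly $d-k$ facets. So I would first set up notation: let the new facet $F = P \cap \overline{H}$ have co-facets $F_1,\dots,F_m$ of $P$, and encode each face of $P'$ lying in $F$ by the subset of facets whose intersection defines it. If $u'$ is a new $k$-face of $P'$, then $u' = F \cap F_{i_1} \cap \cdots \cap F_{i_{d-k-1}}$; its parent is defined by dropping $F$ from this list, giving $u = F_{i_1} \cap \cdots \cap F_{i_{d-k-1}}$, a face of $P$ of dimension exactly $k+1$. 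This translation reduces the lemma to a combinatorial statement about incidences of facet-subsets.

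For the forward direction, I would assume $u'$ and $v'$ are both contained in some $(k+1)$-face $w'$ of $P'$. Since $u',v' \subset F$, we have $w' \subset F$ as well, so the facet-set of $w'$ contains $F$. Containment $u' \subseteq w'$ means the facet-set of $w'$ is contained in the facet-set of $u'$, and similarly for $v'$; thus the facet-set of $w'$ is $F$ together with a collection $S$ of $d-k-2$ facets that appears in both $u'$'s and $v'$'s facet lists. Dropping $F$, the intersection $\bigcap S$ is a $(k+2)$-face $w$ of $P$ containing both $u$ and $v$.

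For the backward direction, I would take the common $(k+2)$-face $w$ of $P$ containing $u,v$ and let $w' := w \cap \overline{H}$. The heart of the argument is to verify that $w'$ is genuinely a $(k+1)$-face of $P'$, i.e., that $w$ crosses $\overline{H}$ in the right dimension. This follows because $u \subseteq w$ and $u$ already crosses $\overline{H}$ strictly (that is what produces the nonempty $k$-dimensional $u'$); hence $w$ has points in both $H^+$ and $H^-$, so $w \cap \overline{H}$ has dimension exactly one less than $w$, namely $k+1$. Then $w'$ is a face of $P'$ containing both $u'$ and $v'$ by construction.

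I expect the main obstacle to lie precisely in this last dimension-check: ensuring that the intersection $w \cap \overline{H}$ is transverse and does not collapse to something lower-dimensional, and that the resulting set is in fact a face of $P'$ (not just of $P \cap \overline{H}$ as a set). The simple-polytope hypothesis, together with the realizability assumptions on the cutset, should be what forces the cut to be generic enough for all new faces to have the expected dimension. A secondary nuisance is checking that the facet-subset of a new face of $P'$ really does correspond to a face of $P$ under the deletion of $F$, which amounts to verifying that the facets $F_{i_1},\dots,F_{i_{d-k-1}}$ appearing in $u'$ were already facets of $P$ that met the edge of $P$ being cut, so that their intersection in $P$ is nonempty and of the asserted dimension.
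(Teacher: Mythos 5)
Your proposal is correct and proceeds by essentially the same mechanism the paper uses: in a simple $d$-polytope each $k$-face is determined by the $d-k$ facets that contain it, so that dropping or adding the new facet $F$ to a face's defining set shifts dimension by exactly one, and ``lying on a common face'' translates into having a common sub-collection of defining facets of the right size. The paper's proof is considerably terser, simply counting the number of common facets of $u,v$ and observing that intersecting with $\overline H$ adds $F$ to both facet lists, raising the count by one; your version spells out the underlying facet-subset bijection, argues both directions of the ``iff'' explicitly, and flags the two points the paper leaves silent, namely that $w\cap\overline H$ must be a $(k+1)$-dimensional face of $P'$ rather than something degenerate, and that the facet collection obtained by deleting $F$ actually indexes a nonempty face of $P$. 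Both of those checks go through (the first because $w$ contains $u$, which already meets both sides of $\overline H$; the second because the remaining facets all contain $u$, so their intersection is nonempty and hence a face), so your more explicit account is a sound elaboration of the same argument rather than a genuinely different route.
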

\begin{proof}
Let $u$ and $v$ lie on a common $k+2$-face $F$ in $P$.  Then, $u$ and $v$ have $d-(k+2)$ facets in common.  Intersecting $H$ with $F$ increases the number of common facets of $u$ and $v$ by exactly 1, so $u'$ and $v'$ have $d-(k+1)$ facets in common.  Hence $u'$ and $v'$ lie on a common $k+1$-face.  Similarly, if $u'$ and $v'$ lie on a common $(k+1)$-facet after a cut, then they have $d-(k+1)$ facets in common, so removing $F$ causes their parent faces $u$ and $v$ to have $k$ facets in common.$\qed$
\end{proof}



\subsection{Multi-vertex cut algorithm}

Input: face lattice of polytope $P = \{V,F\}$, $C \subset V$\\
Output: face lattice of polytope $P' = \{V',F'\}$ generated by truncating $C$ from $P$\\
\begin{algorithm}[H]
\SetLine
 \FuncSty{CutPolytope}(P,C) 

	\For{each vertex $v$ in cutset $C$}
	{
		\For{each neighbor of $v$ not in $C$}
			{Add new vertex $v'$ to vertex list $V$}
		Delete old vertex $v$\\
			\For{each face $f$ containing $v$}
			{
			mark $f$ and recursively mark all parents of $f$ }
		}
	\For{each dimension $k$ from $d$ to $1$}
		{
			\For{each face $f$ of dimension $k$ in $F$}
					{
					\If{$f$ is marked}
						{
						create a new $(k-1)$-face $G$ in $F$\\
						\For{each pointer in $f$ to a face $g$ of dimension $k-2$}
							{
								\If{$g$ is marked} 
								{insert $g$ into $G$ }
							}
					}
			}
			return $P$
		}

\caption{Updating the face lattice to represent an additional facet (Cut Algorithm).}
\label{algo:main2}
\end{algorithm}


\subsubsection{Proof of correctness}
The algorithm operates as follows.  The input polytope is given as a set of faces.  The vertices (0-faces) are given as an explicit list of numbers; each $k$-face is given as a collection of pointers to $k-1$-faces.

First, add new vertices to the list of 0-faces, and mark them as changed.  For dimension 1, note any edges that contained a deleted 0-face, and mark them as changed. Then, for dimensions $k=1$ to $d$, mark which faces are changed. Next, counting backwards from dimensions $k=d$ to $k=1$, for each $k$-face $F$, look at all $(k-2)$-faces contained within it by following its pointers in the table.  For each $F$, add a new $(k-1)$ face $G$ consisting of all changed $(k-2)$-faces of $F$, and add $G$ as a new element of $F$.  Finally, remove all references to deleted faces.

The correctness then follows immediately from Lemma \ref{k2}.

\subsubsection{Runtime analysis}
Implementation of the face poset as a doubly-linked list of pointers means that each $k-$ face is visited no more than once per visit to each face of a higher dimension.  Since these steps form the largest loops of the algorithm, it dominates the runtime.

The runtime of the cut-update algorithm is thus $O(max(f^{*})^2)$, where $f^*$ is the $f$-vector of $P$.  To be precise, the number of updates that must be executed at each step is bounded above by $O(\sum_{k=0}^d  (f^*_k*f^*_{k+2})$, the total of the products of the number of faces of dimension $k$ and $k+2$ for all $k$.

\subsection{Incremental updates by hyperplane perturbation}

The algorithm of the previous section provided a method for determining the combinatorial structure of a polytope when a particular cutset of vertices are truncated by a hyperplane.  However, the enumeration algorithm we have presented requires the truncation of successively larger cutsets one at a time, many of which differ by only one vertex.  As such, much of the face lattice will be unchanged between polytopes with similar cutsets.  We are therefore motivated to seek an algorithm that represents the effects of a \emph{planar sweep} on the combinatorial structure of a polytope, truncating successively larger sets of vertices each time.  We formulate this problem as follows: given a hyperplane $H$ forming facet $F_H$ in face lattice $P$, along with a vertex $v$ adjacent to $F_H$, how do we compute the polytope $P'$ obtained when $H$ is translated to truncate $v$?

The following algorithm permits us to generate the new structure incrementally from the old structure, without the need to recompute the full face lattice of the polytope.  The mechanism is mainly identical to the algorithm of the preceding section, except that we are aware of the structure of all new vertices and faces formed in advance -- the new vertices formed in place of $v$ become a $(d-2$)-simplex. \\

\begin{algorithm}[H]
\SetLine
Input: facet $F_H$, face lattice $P$, vertex $v$ adjacent to some vertex $u$ of $F_H$\\
Output: updated face lattice $P'$ where $H$ has been perturbed to truncate $v$\\
\FuncSty{PushFacet(P,F,v)}
\Begin{
\If{$v$ not adjacent to some vertex $u$ of $F$}{return ``input error''}
\For{each neighbor of $v$ not in $F_H$}
			{Add new vertex $v'$ to vertex list $V$}
	Delete old vertex $v$\\
\For{each face $f$ containing $v$}
			{
		mark $f$ and recursively mark all parents of $f$ }

\For{each dimension $k$ from $d$ to $1$}
		{
			\For{each face $f$ of dimension $k$ in $F$}
					{
					\If{$f$ is marked}
						{
						create a new $k-1$-face $G$ in $F'$\\
						\For{each pointer in $f$ to a face $g$ of dimension $k-2$}
							{
								\For{each marked $g$} 
								{insert $g$ into $G$ }
							}
					}
			}
		}

}

\caption{Algorithm to update a face lattice after perturbation of a facet (sweep).}
\label{algo:sweep}
\end{algorithm}

\subsubsection{Proof of correctness}
The preceding algorithm is identical to the algorithm \ref{algo:main2}, except with respect to how new vertices are created.  Rather than creating new vertices on every edge of the truncated vertex $v$ we create new vertices only on those neighbors of $v$ not already in $F_N$.  The correctness of the rest of the algorithm follows from theorem \ref{k2}.

\subsubsection{Runtime Analysis}
The steps of the algorithm require adding all the new vertices generated by the perturbation, connect them to old vertices, and update any higher-order faces containing the vertex.  There are exactly $d-1$ new vertices that neighbor $v$   Because we need only consider the faces containing this vertex, the runtime may be considerably faster: the global runtime is still $O(\sum_{k=0}^d  (f^*_k*f^*_{k+2})$, but $f^*$ is now the f-vector of $P$ restricted only to those faces containing $v$.

\subsubsection{Graph-only variant}
If we are only interested in updating the polytopal graph rather than the full face lattice, we conjecture we may be able to operate considerably faster.  If omit the update of higher-dimensional faces and restrict our attention only to the graph, we need only update the vertices and edges of $P$ associated with the local neighborhood of the removed vertex $v$. There are $d$ new vertices formed and a maximum of $d$ old edges they may potentially be connected to, for an overall runtime of  $O(d^2)$.  Iteratively applying this algorithm over the entire vertex set leads to a faster mechanism for the cut algorithm of the previous section, requiring total time $O(|V|d^2)$ to truncate $V$ vertices. 



\section{Implementation and Experimentation}
In this section, we provide a full code-level description of the entire enumeration algorithm.  The algorithms follow below.

\begin{algorithm}[H]
\SetLine
void \FuncSty{Polytope::isValidCut}(Cuts C) \Begin{
	Graph complement = GraphComplement( graph, C)\;
	\If { complement.notConnected() } { return false\;}
	\For{f in FacetVertexVector() }{
			\If { f - C ==  $\phi$ } { return false\; }
	}
	return true\;
}
\caption{Determining whether a set of vertices satisfies Theorem \ref{props}.}
\label{algo:is_valid}
\end{algorithm}

\clearpage

\begin{algorithm}[H]
\SetLine
void \FuncSty{ComputeCuts}(Polytope P, FixedSet F, Set$<$Cuts$>$ result) \Begin{
	Orbit orbits = \FuncSty{ComputeOrbits}(P, F)\;
	\For{o in orbits }{
		int e = \FuncSty{min}(o)\;
		\If { $\neg$ P.isNeighbor( F, e)} { continue\;}
		\If { $\neg$ P.isValidCut( F $\cup$ e ) } { continue\;}
		F = F $\cup$ e\;
		\If {result.find( F ) $\ne$ result.end() } { result.insert( F )\;
		ComputeCuts( P, F, result )\;
		}
	}
}
\caption{Algorithm to generate all nonisomorphic cutsets for a polytope.}
\label{algo:compute_cuts}
\end{algorithm}
\clearpage

\begin{algorithm}[H]
\SetLine
void \FuncSty{CutPolytope}(Polytope P, Cut K) \Begin{
	\For{ v in K } { P.markVertexRemoved( v )\;}
	\For{ e in P.edges } { 
	\If{ e[0] $\in$ K $\wedge$ e[1] $\in$ K} { P.markEdgeRemoved( e )\; } 
	\If{ e[0] $\in$ K $\vee$ e[1] $\in$ K} { newVertices = P.cutEdge( e, K )\; } 
	}
	// Propagate change bits

	\For{ N=2; N$<$D;++N} {
		FaceRef PreviousCurrentFaceColumn( P.getFace( N-1 ) )\;
		FaceRef CurrentFace( P.getFace( N ) )\;
		\For{ int i=0; i $<$ C.size(); ++i } {
				FaceIndexRef T( C[i] )\;
				\For{ int k=0; i $<$ T.size() and T.changeCode==0; ++k } {
					\If { PC[ T[k] ].changeCode $>$ 0} {T.changeCode = 1\;}
				}
			}
	}
	// now discover new faces in d-1, again start off with N=2\\
	\For{ N=2; N$<$D;++N} {
		FaceRef PreviousCurrentFaceColumn( P.getFace( N-1 ) )\;
		FaceRef CurrentFaceColumn( P.getFace( N ) )\; 
		\For{ int i=0; i $<$ C.size(); ++i } {
				FaceIndexRef T( C[i] )\;
				FaceIndex cbt\;
				\For{ int k=0; i $<$ T.size(); ++k } {
					\If { PC[ T[k] ].changeCode $>$ 0 } { cbt.push\_back( C[i][k]\;}
				}
			}
		AddDerivativeFace( P, N, cbt )\;
		// reset change code for PC
	}

}
\caption{CutPolytope algorithm.}
\label{algo:cut_polytope}
\end{algorithm}
\clearpage

\begin{algorithm}[H]
\SetLine
int \FuncSty{CheckPolytope}(Polytope P, int depth) \Begin{
	\If { $\neg$ Is2D( P ) } { return (0)\;}
	\If {$\neg$ UniquePolytope(P)} { return (0)\;}
	Set$<$Cuts$>$ vertex\_cuts = \FuncSty{ComputeCuts}(P)\;
	\For{c in vertex\_cuts }{
		Polytope remainder = P\;
		\FuncSty{CutPolytope}( remainder, c )\;
		\If {$\neg$ UniquePolytope( remainder ) } { continue\;}
        \FuncSty{CheckPolytope}( remainder, depth+1)\;
	}
	return (0)\;

}
\caption{Polytope enumeration algorithm.}
\label{algo:main0}
\end{algorithm}
\clearpage

To further establish the correctness of this algorithm, we generated all 6-facet polytopes in 3 dimensions and all 8-facet polytopes in 4 dimensions, then checked their graphs by hand against those published in the literature \cite{catalog}.  A screenshot of one of the generated graphs is shown below as an example.

\begin{centering}
\center
\begin{figure}[h]
\includegraphics[scale=.7]{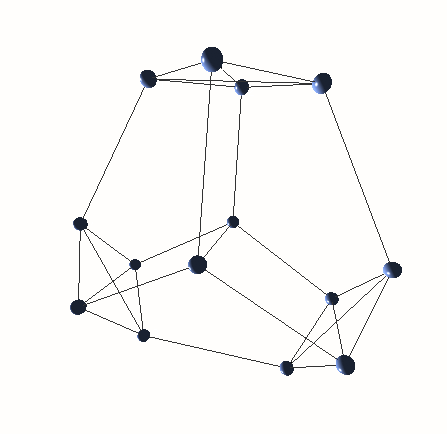}
\caption{The graph of a $d=4,n=8$ polytope generated by our algorithm.  This is a 4D simplex with three vertices truncated.}
\end{figure}
\end{centering}

\section{Applications}
The existence of an enumeration algorithm for polytopes that precisely specifies the behavior of polytopal graphs provides a new technique of addressing open questions about polytopes. Previous constructive operations like wedging and polytope products have permitted construction of specific polytopes satisfying certain properties, for instance, in the proof of the strong $d$-step conjecture for $d<5$ \cite{klee-walkup}.  However, because these operations were not guaranteed to generate all possible polytopes it was not possible to use them in arguments concerning global properties of polytopes.  

We have provided a mechanism to use the operations of \emph{cutting} and \emph{pushing} introduced in the previous two algorithms to generate all polytopes in a given dimension.  These operations enable inductive arguments over the entire class of simple polytopes.  In general, we will seek to show that some property of the polytopal graph is preserved during a cut or push operation.

\subsection{The $d$-step conjecture}
Recall that the Hirsch conjecture states that all $n-$facet polytopes in $R^d$ (henceforth (n,d)-polytopes) have diameter at most $n-d$.  According to a result of Klee and Walkup, the statement is equivalent to the conjecture that all simple $(d,2d)$-polytopes have diameter at most $d$, known as the $d$-step conjecture.

Klee and Walkup show that the maximum diameter of all $(d,2d)$ polytopes is realized by a particular class of simple $(d,2d)$-polytopes with two vertices $x$ and $y$ not sharing any common facet, known as Datnzig figures.  They observe further that the diameter of any Dantzig figure is realized by the distance between these two vertices.  Thus, to prove the $d-$step conjecture for a given $d$ it is sufficient to prove that the diameter between $x$ and $y$ on any $d$-dimensional Dantzig figure is at most $d$.

Our algorithm implies several sufficient statements for the $d$-step conjecture that we will present here, alongside a new proof that the $d-$step conjecture is true for all $d<6$. 

For the base case of an inductive proof we note that the $d-$ step conjecture is trivially true in dimension 2.  Next, we observe that any Dantzig figure $D$ is combinatorially equivalent to the intersection of a $d-$halfspace with a $(d,2d-1)$-polytope $P$, known as its \emph{parent polytope} (by theorem \ref{kleelemma}).  Recall that for any polytope with $n<2d$ every two vertices lie on a common facet \cite{grunbaum-book}.   Recall further that for any simple polytope with $n>d+1$ no facet can intersect every other facet, so no facet of $P$ has more than $2d-2$ $(d-2)$-faces \cite{ziegler-book}.  Recall last that the facets of a $d$-polytope are $(d-1)$-polytopes.  By the second and third recollections the facets of $P$ are at most $(d-1,2d-2)$-polytopes, and by the first fact the diameter of $P$ is at most the maximum diameter of any of its facets.  Thus the diameter of $P$ is at most $\Delta(d-1,2d-2)$.  By the inductive step, this means the diameter of $P$ is at most $(2d-2)-(d-1) = d-1$.

Thus, the d-step conjecture is true exactly if the diameter of every Dantzig figure $D$ is at most 1 above the diameter of its parent $P$.

$D$ contains a new facet $F$ that is not a facet of $P$.  Observe that, since $D$ is a Dantzig figure, the vertex $x$ must lie on $F$ and the vertex $y$ must not, or else $x$ and $y$ would lie on a common facet. 

\begin{proposition}\label{combin}
The combinatorial structure of $F$ is defined uniquely by the graph structure of the removed vertices.
In particular:
If $H$ truncates one vertex, $F$ is a $(d-1)$-simplex.
If $H$ truncates two vertices, $F$ is a $(d-1)$-simplicial prism.
If $H$ truncates three or more vertices, $F$ is a $(d-1)$ polytope whose type may be uniquely deduced from the face lattice of $P$ and the graph structure of the removed vertices.
\end{proposition}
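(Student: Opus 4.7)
The plan is to derive the combinatorial structure of $F$ directly from Lemma \ref{k2}, which already tells us that the $k$-faces of $F$ are in bijection with the $(k+1)$-faces of $P$ whose intersection with $H$ is nontrivial, i.e.\ those $(k+1)$-faces of $P$ that contain at least one vertex of $C$ and at least one vertex outside $C$, with incidence relations inherited from $P$. This observation, applied uniformly for $k=0,\ldots,d-2$, immediately gives the third (most general) claim: the face lattice of $F$ is read off unambiguously from the sublattice of $P$ consisting of those faces that meet both $C$ and $V(P)\setminus C$, so $F$ is uniquely determined by the face lattice of $P$ together with the cutset.

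For the one-vertex case $C=\{v\}$, I would specialize the above as follows. The cut $(k+1)$-faces are exactly those faces of $P$ that contain $v$. Because $P$ is simple, the vertex figure at $v$ is a $(d-1)$-simplex, and equivalently the sub-poset of $P$ consisting of faces containing $v$ is isomorphic to the face lattice of a $d$-simplex with apex $v$. Restricting to faces of dimension $\geq 1$ and shifting dimension by one, Lemma \ref{k2} then identifies the face lattice of $F$ with the face lattice of a $(d-1)$-simplex, proving the first claim.

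For the two-vertex case, the connectedness condition of Theorem \ref{props} forces $C=\{u,v\}$ with $uv$ an edge of $P$. I would partition the cut faces of $P$ into three classes: (A) faces meeting $C$ only at $u$, (B) faces meeting $C$ only at $v$, and (C) faces containing the whole edge $uv$ (together with some vertex outside $C$). By simplicity, each of classes (A) and (B), together with the incidences among them, is isomorphic to the face lattice of a $(d-2)$-simplex (these are the stars of $u$ and $v$ after removing faces containing $uv$), while class (C) is isomorphic to the star of the edge $uv$ in $P$, which under simplicity is again isomorphic to the face lattice of a $(d-2)$-simplex. Lemma \ref{k2} then assembles these into $F$ by joining each face of class (C) to its two natural counterparts in (A) and (B); this is exactly the product structure $\Delta^{d-2}\times I$, i.e.\ the $(d-1)$-simplicial prism, proving the second claim.

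The main obstacle is the bookkeeping in the two-vertex case: checking that the incidences dictated by Lemma \ref{k2} between classes (A), (B), and (C) match the product lattice of $\Delta^{d-2}\times I$ rather than some other common refinement. This amounts to observing that in a simple polytope the faces containing the edge $uv$ are in canonical one-to-one correspondence with the faces at $u$ not containing $v$ (and similarly at $v$), via the simplicial vertex-figure structure; once this bijection is in hand, the product structure follows. The one-vertex case and the general structural statement are essentially immediate from the vertex-figure description and from Lemma \ref{k2} itself.
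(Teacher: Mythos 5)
Your argument is correct, but it takes a genuinely different route from the paper's. The paper argues by counting the vertices of $F$: a vertex of $F$ arises on each edge of $P$ from $T$ to $S$, so $|T|=1$ gives $d$ vertices and $|T|=2$ gives $2d-2$ vertices, and the paper then invokes (without proof) that the $(d-1)$-simplex is the only simple $(d-1)$-polytope on $d$ vertices and the $(d-1)$-simplicial prism the only simple $(d-1)$-polytope on $2d-2$ vertices, concluding by uniqueness. You instead reconstruct the entire face lattice of $F$ directly from Lemma \ref{k2}, identifying it with the sub-poset of faces of $P$ that meet both $C$ and its complement, shifted down by one dimension; the one-vertex case then reduces to the fact that the vertex figure of a simple polytope is a $(d-1)$-simplex, and the two-vertex case to gluing two vertex-figure $(d-2)$-simplices along the edge figure of $uv$ (also a $(d-2)$-simplex) into the product lattice $\Delta^{d-2}\times I$. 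Your approach is more work in the two-vertex case, but it is more structural: it avoids the paper's unjustified uniqueness claim for simple polytopes with $2d-2$ vertices, it actually produces the combinatorial isomorphism rather than merely identifying the isomorphism type, and it makes the general claim (that $F$ is determined by the face lattice of $P$ restricted to cut faces) an immediate corollary of the same machinery rather than a separate assertion. The bookkeeping issue you flag in the two-vertex case is real but resolvable exactly as you suggest, via the canonical bijection in a simple polytope between faces through the edge $uv$ and faces through $u$ not containing $v$.
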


\begin{proof}
Let $T$ denote the vertices of $P$ truncated by $H$ and let $S$ denote those vertices of $P$ not truncated by $H$.  Let $E(S,T)$ denote the number of edges between vertices in $S$ and vertices in $T$.

Because each vertex $v$ of $F$ is a 0-dimensional face it can occur only at the intersection of $H$ and an edge of $P$.  Since vertices may only be formed along the edges from $S$ to $T$, the number of vertices of $F$ is equal to the number of edges passing between truncated and untruncated vertices of $P$, or $E(S,T)$.

In particular, if $|T|=1$, then exactly $d$ vertices are incident to $T$, and so $F$ has exactly $d$ vertices.  Since the $(d-1)$-simplex is the only simple polytope with $d$ vertices, $F$ must be a $d-1$-simplex.  If $T$=2, then exactly $2d-2$ edges are incident to $T$, so $F$ has exactly $2d-2$ vertices.  A $d-1$-simplicial prism is the only simple $(d-1)$-polytope with $2d-2$ vertices, so $F$ must be a $(d-1)$-simplicial prism.  

In cases where multiple $(d-1)$-polytopes have the same number of vertices as $F$, it is necessary to explicitly compute the structure of $F$ based on the edges incident to $T$. As the combinatorial structure of a simple polytope is defined entirely by its edge-vertex graph, it suffices to account for all vertices and edges of $F$ to define its combinatorial structure.  In a simple polytope all vertices have exactly $d$ neighbors.  Because each vertex $v$ of $F$ is formed along an $S-T$ edge, exactly one neighbor of $v$ is a member of $T$.  Consequently, the remaining $(d-1)$ neighbors of $v$ must lie on $F$.  

The edges of $F$ may be determined by determining the intersection of $H$ with 2-dimensional faces of $P$, using either algorithm \ref{algo:sweep} or \ref{algo:cut_polytope}.
\end{proof}

\begin{corollary}
The number of combinatorially distinct facets generated by the truncation of $V$ vertices is equal to the number of possible connected $d$-regular graphs on $V$ nodes with distinct outgoing edges.
\end{corollary}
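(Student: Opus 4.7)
The plan is to upgrade Proposition \ref{combin} from a structural determination statement to a counting statement. Proposition \ref{combin} already tells us that the combinatorial type of the new facet $F$ produced by truncating a cutset $T$ is recoverable from the graph structure of $T$ together with the edges incident to it. To conclude the corollary, it suffices to identify this structural data with a connected $d$-regular graph on $V=|T|$ nodes whose outgoing edges are distinguished, and to check that the identification is a bijection onto combinatorial types of $F$.

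First I would formalize the abstract cutset object. To each cutset $T$ of size $V$ I associate the multigraph on $V$ nodes where each node carries $d$ labelled half-edges, one per facet of $P$ through the corresponding truncated vertex, using simplicity of $P$. Half-edges pair into internal edges when their two truncated endpoints are adjacent in $G_P$, and otherwise remain unpaired as outgoing half-edges. Connectedness of this object is precisely property (1) of Theorem \ref{props}, and every node has total degree $d$ by simplicity, so this is a connected $d$-regular graph with distinguished outgoing edges in the sense of the corollary.

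Next I would invoke Lemma \ref{k2} together with Proposition \ref{combin} to reconstruct the combinatorial type of $F$ from the abstract cutset. By Proposition \ref{combin}, the vertex set of $F$ is in bijection with the outgoing half-edges. By Lemma \ref{k2} applied at $k=0$, two vertices of $F$ are adjacent in $F$ if and only if their parent edges of $P$ lie on a common $2$-face of $P$. Since $P$ is simple, the $2$-faces containing a given truncated vertex $v$ correspond to pairs of facets through $v$, so locally the link of $v$ is a $(d-1)$-simplex and pairs up outgoing half-edges at $v$ in a canonical way; the $2$-faces spanning an internal edge $uv$ of $T$ pair one outgoing half-edge at $u$ with one at $v$, and are likewise recovered from the abstract data. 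Together these reconstruct the edge-vertex graph of $F$, and hence by simplicity of $F$ its full combinatorial type.

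The hard part will be the converse direction: proving that distinct abstract cutset graphs must in fact yield combinatorially nonisomorphic facets, rather than merely that they are sufficient to specify $F$. The concern is that different cutsets could sit inside $P$ in different ambient positions and nevertheless produce the same facet combinatorics. One would rule this out by showing that the labeled cutset data is exactly the isomorphism invariant being enumerated on the left-hand side, so that the map from cutsets to facets is injective on isomorphism classes, not just surjective. Once this bijection is pinned down, the count follows immediately from Proposition \ref{combin}.
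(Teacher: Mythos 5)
Your setup is the right one, and it matches the spirit of what the paper intends: the paper offers no argument at all for this corollary, treating it as an immediate consequence of Proposition \ref{combin} (the facet $F$ is determined by the graph structure of the removed vertices, the cutset is connected by Theorem \ref{props}, and simplicity forces total degree $d$ at each truncated vertex). But your proposal is a plan rather than a proof, and the two directions that actually carry the content of the stated equality are left open. First, you explicitly defer injectivity (``the hard part will be the converse direction'') with only a promissory note that the labeled cutset data ``is exactly the isomorphism invariant being enumerated''; nothing in the proposal establishes that two nonisomorphic abstract cutset graphs cannot produce combinatorially isomorphic facets. Second, the claimed \emph{equality} of counts also requires a realizability direction you never address: that every connected $d$-regular graph on $V$ nodes with distinguished outgoing edges actually occurs as the cutset graph of some admissible truncation of some simple $d$-polytope. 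Without that, the number of facet types could be strictly smaller than the number of graphs, and the corollary would only be an inequality.

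There is also a subtler problem in your well-definedness step. You assert that the $2$-faces pairing an outgoing half-edge at $u$ with one at $v$ across an internal edge $uv$ ``are likewise recovered from the abstract data,'' but this is precisely where ambient information enters: which pairs of parent edges share a $2$-face of $P$ is determined by the facets of $P$ they have in common, and Proposition \ref{combin} itself only claims the type of $F$ is deducible from the face lattice of $P$ \emph{together with} the cutset graph, not from the cutset graph alone (this matters as soon as $|T|\geq 3$, e.g.\ for a path $u$--$v$--$w$ one must know whether and how edges at $u$ and at $w$ share $2$-faces). So even the map from abstract graphs to facet types needs an argument that the resulting edge-vertex graph of $F$ is independent, up to isomorphism, of how the cutset sits inside $P$; you assert this but do not prove it. In short, the identification you describe is the intended one, but the corollary as stated needs well-definedness, injectivity, and realizability, and none of the three is actually established in the proposal.
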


Call the vertices of $F$ \emph{new} vertices.  Call the remaining vertices of $D$ old vertices.

\begin{proposition}
The diameter of any Dantzig figure produced by the truncation of a single vertex $v$ is at most $d$. 
\end{proposition}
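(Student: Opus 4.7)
The plan is to leverage the inductive setup already assembled: Theorem~\ref{kleelemma} writes the Dantzig figure $D$ as the intersection of a simple $(d, 2d-1)$-parent polytope $P$ with a halfspace, and the discussion just before this proposition has already shown $\mathrm{diam}(P) \le d-1$. Since the diameter of a Dantzig figure is realized by the distance between the two distinguished vertices $x$ and $y$, the target reduces to proving $d_D(x, y) \le d$.

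Next I would use Proposition~\ref{combin} to pin down the local geometry of the cut. Because only a single vertex $v$ is truncated, the new facet $F$ is a $(d-1)$-simplex; hence the $d$ new vertices introduced by the cut are pairwise adjacent in $D$, and each has a unique old neighbor, namely the endpoint of the edge of $P$ at $v$ on which it was created. Since $2d-1 < 2d$, every pair of vertices of $P$ lies on a common facet, so one of $\{x, y\}$ (call it $x$) must lie on $F$ while the other ($y$) is an old vertex of $P \setminus \{v\}$. Let $x^{\ast}$ denote the unique old neighbor of $x$ in $D$; it is a vertex of $P$ adjacent to $v$, and the bound on the parent gives $d_P(x^{\ast}, y) \le d-1$.

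The remaining step, and the one I expect to carry the real content, is to lift a shortest $x^{\ast}$-$y$ path in $P$ to a short $x$-$y$ path in $D$ even when the deletion of $v$ breaks the original path. I would proceed by a case split. If some shortest $P$-path from $x^{\ast}$ to $y$ avoids $v$, it survives intact in $D$ and produces an $x$-$y$ walk of length at most $1 + (d-1) = d$ by prepending the edge $(x, x^{\ast})$. Otherwise every shortest $P$-path has the form $x^{\ast}, v, u, w_1, \ldots, y$ for some neighbor $u$ of $v$; one then substitutes the path $x, x_u, u, w_1, \ldots, y$, where $x_u$ is the new vertex on the edge $(v, u)$, obtaining a path of the same length $k \le d-1$. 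The crucial input here is precisely that $F$ is a simplex, so $x$ and $x_u$ are adjacent and the removed vertex can be detoured at unit cost. In either case $d_D(x, y) \le d$, which is the desired bound; the main obstacle, as indicated, is the rerouting step, which is handled cleanly by the simplicial structure of $F$.
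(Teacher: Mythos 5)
Your proof is correct and follows essentially the same route as the paper's: both rest on the parent polytope's diameter bound of $d-1$ together with the fact that the new facet $F$ is a $(d-1)$-simplex, so that a short path in $P$ can be transferred to $D$ and completed across $F$ in one extra step. Your bookkeeping via the old neighbor $x^{\ast}$ and the rerouting through the new vertex $x_u$ is in fact tighter than the paper's own accounting, which as written only reaches the bound $d+1$ rather than the claimed $d$.
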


\begin{proof}
In order to have no common facets in $D$, $x$ must lie on $F$ and $y$ must not be a member of $F$. The distance from $v$ to $y$ in $P$ is at most equal to the diameter of $P$, or $d-1$. Since the vertices of $F$ are formed on edges incident to $v$, there is at least one vertex of $F$ a distance $d$ away from $y$.  Because it is a simplex, the diameter of a $F$ is 1. Thus, the distance from $y$ to $x$ is at most $d+1$.
\end{proof}

Consider now a \emph{translation sequence} of $k$ Dantzig figures $\{D_{T_i}\}_{i=1}^k$ generated by a linear translation of $H$ through $P$, where the $i$th figure results from the truncation of set $T_i$, with $|T_i|=i$ and $T_{i+1} = T_i \cup v$ for some vertex $v$ neighboring $T_{i}$ in the graph of $P$.

\begin{proposition}\label{translation}
Every Dantzig figure occurs as a member of some translation sequence.
\end{proposition}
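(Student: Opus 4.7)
The plan is to recover any given Dantzig figure $D$ as the terminal member of a translation sequence built by first sweeping its defining hyperplane backward out of its parent polytope $P$ and then advancing it forward through $P$, absorbing one vertex at a time.

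\emph{Setup.} By Theorem~\ref{kleelemma} we realize $D$ combinatorially as $P \cap H^+$, where $P$ is a simple $(d,2d-1)$-polytope and $H^+$ has associated linear functional $\phi$. After an arbitrarily small perturbation of $H$ that preserves the combinatorial type of $D$ and hence the cutset $T \subset V(P)$, we may assume $\phi$ takes distinct values at all vertices of $P$. Order $T$ as $v_1, \dots, v_k$ with $\phi(v_1) > \cdots > \phi(v_k)$, set $T_i = \{v_1, \dots, v_i\}$, and let $H_i$ be the parallel translate of $H$ at any level strictly between $\phi(v_i)$ and $\phi(v_{i+1})$. The family $\{H_i\}_{i=1}^k$ is a linear translation of $H$ through $P$ whose $i$-th member truncates exactly $T_i$, placing $D = D_{T_k}$ at the end of the resulting sequence.

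\emph{Key step.} The sequence is valid precisely when each $v_{i+1}$ is adjacent to some vertex of $T_i$ in $G_P$, i.e., when each $T_i$ is a connected induced subgraph of $G_P$. We establish this via the classical local-max-is-global-max principle for simple polytopes under a generic linear functional: at any non-maximum vertex $v$ of $P$ the simplicial tangent cone at $v$ must contain at least one edge direction along which $\phi$ strictly increases, since otherwise $v$ would itself be a global $\phi$-maximum by convexity of $P$. Iterating from any $v \in T_i$ produces a monotone edge-walk that remains inside $T_i$ (which is a $\phi$-superlevel set, as $\phi(v_1)$ is the global max of $\phi$ on $P$) and terminates at $v_1$, so $T_i$ is connected.

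\emph{Main obstacle.} Each intermediate $D_{T_i}$ is automatically a simple $(d,2d)$-polytope by construction, but the strict definition of a Dantzig figure further requires two vertices sharing no common facet. The old vertex $y$ of the terminal pair $(x,y)$ persists in every $D_{T_i}$, so it suffices to exhibit in each $D_{T_i}$ a new vertex on the new facet $F_i$ whose incident $P$-facets are disjoint from those of $y$. A natural strategy, via a further perturbation of $\phi$, is to arrange that the truncated endpoint of $x$'s parent edge be $v_1$, so that it lies in every $T_i$; then the combinatorial lift $x_i$ of $x$ exists in each $D_{T_i}$ and, by Lemma~\ref{k2}, inherits exactly the same incident $P$-facets as $x$, preserving disjointness from the facets of $y$. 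Showing that such a perturbation is always achievable---that the truncated endpoint of $x$'s parent edge can always be realized as the $\phi$-maximum of $T$---is the main obstacle and the delicate part of the argument.
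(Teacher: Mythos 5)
Your approach is essentially the paper's: realize $D = P \cap H^+$ via Theorem~\ref{kleelemma}, then translate $H$ along its normal so that vertices of the cutset $T$ are absorbed one at a time. The paper runs the sweep outward, peeling off the ``outermost'' vertex of $T$; you run it inward, starting from the $\phi$-maximum and adding one vertex at a time. The content is the same, but you are noticeably more careful about the connectedness requirement. The paper asserts connectedness of the truncated sets via an appeal to convexity and an informal ``outermost boundary'' argument; you give the standard, correct justification that a generic linear functional $\phi$ has the local-max-is-global-max property on a polytope, so each superlevel set $T_i$ is connected by a monotone edge walk to $v_1$. This is the right way to make the sweep precise and is an improvement over the paper's wording.

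More importantly, you flag an issue the paper's proof ignores entirely. As defined, a translation sequence is a ``translation sequence of $k$ \emph{Dantzig figures}'' $\{D_{T_i}\}_{i=1}^k$, so every intermediate polytope must itself contain two vertices sharing no common facet. The paper only shows that each $T_i$ is a set that some translate of $H$ truncates; it never verifies that the intermediate simple $(d,2d)$-polytopes inherit the Dantzig property, which is a nontrivial condition (in a simple $(d,2d)$-polytope, the two vertices must jointly cover all $2d$ facets). Your strategy of tracking the persistent vertex $y$ and a combinatorial lift of $x$, and trying to force the $\phi$-maximum of $T$ to be the truncated endpoint of $x$'s parent edge, is a plausible repair, but you are right to call it the delicate and unfinished part. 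This is a genuine gap in the proposition as stated; it is present in the paper's proof and acknowledged but not closed in yours.
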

\begin{proof}
Consider a Dantzig figure $D$.  By the Kleitman-Klee result, $D$ is the union of a halfspace $H$ and a parent polytope $P$.  $H$ must truncate some vertices of $P$.  Call this set $V_k$.  Since $H$ is a convex region and $P$ is a convex body, the vertices of $V_k$ must form a connected subgraph of $P$. 

By convexity, a linear translation of $H$ along its normal vector will bring it in contact with a vertex $v_k$ on the outermost boundary of the graph of $V_k$.  Thus, there is a translation of $H$ that will truncate a set of vertices $V_{k-1}$, where $V_{k}=v_k \cup V_{k-1}$.  Inductively, there are translations of $H$ that truncate sets $V_{k-2},V_{k-3},\ldots,V_{2},$ and $V_1$ where $V_{i}=v_{i} \cup V_{i-1}$ and $v_i$ is a neighbor of some vertex in $V_{i-1}$.  

The sequence $\{D_{V_i}\}_{i=1}^{k}$ is a translation sequence containing $D$. 
\end{proof}

We refer in the remainder of the section to the \emph{inductive step} of the $d$-step conjecture.  As such, we assume that the $d$-step conjecture is true for all dimensions up to $d-1$, and seek to show that it holds for dimension $d$ in all cases. 

\begin{theorem}
The inductive step of the $d$-step conjecture is true for all Dantzig figures whose largest facet has no more than $(2d-2)$ subfacets.
\end{theorem}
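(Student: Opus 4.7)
The plan is to combine a diameter bound for each facet of $D$ with a structural property of Dantzig figures. First, since $D$ is simple, every facet of $D$ is itself a simple $(d-1)$-polytope, and its own facets (the subfacets of $D$) number at most $2d-2$ by hypothesis. Hence every facet of $D$ is at most a $(d-1, 2d-2)$-polytope, and by the inductive hypothesis of the $d$-step conjecture in dimension $d-1$, every facet of $D$ has diameter at most $(2d-2)-(d-1)=d-1$.

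Next, I would establish the key structural observation: in any Dantzig figure, every facet contains exactly one of $x$ and $y$. Because $D$ is simple, $x$ and $y$ each lie on exactly $d$ facets; because they share none by the Dantzig property, these two disjoint collections account for all $2d$ facets of $D$, so they partition the facet set into a block of $d$ facets containing $x$ and a block of $d$ facets containing $y$.

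With these two ingredients, the proof reduces to a single-edge argument. Let $F_1,\ldots,F_d$ be the facets of $D$ containing $x$, and for each $i$ let $x_i$ be the neighbor of $x$ along the unique edge at $x$ that is not contained in $F_i$. By simplicity, $x_i$ lies on $\{F_1,\ldots,F_d\}\setminus\{F_i\}$ together with exactly one additional facet $F_i'$. Since $F_i'$ does not contain $x$, the partition property forces $F_i'$ to contain $y$. Within $F_i'$, the distance from $x_i$ to $y$ is at most $d-1$ by the facet diameter bound established in the first paragraph, so $\mathrm{dist}_D(x,y)\leq 1+(d-1)=d$, which is the desired inductive step.

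I do not expect any serious obstacle. The only bookkeeping point is to verify that the inductive hypothesis applies uniformly to every facet of $D$ rather than only the largest, which is immediate since all facets have at most $2d-2$ subfacets. Notably, the argument makes no reference to whether $x$ or $y$ was introduced as a new vertex by the cut producing $D$ from its parent polytope — it uses only simplicity, the facet diameter bound, and the Dantzig partition — so no case analysis on the cut structure is required.
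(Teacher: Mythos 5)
Your proof is correct and takes essentially the same route as the paper's terse argument: bound the diameter of every facet of $D$ by $d-1$ via the inductive hypothesis, then observe that $x$ reaches, in one pivot, a vertex sharing a facet with $y$. The difference is that you actually supply the facet-partition argument ($x$ and $y$ each lie on $d$ of the $2d$ facets, disjointly, so stepping off any one of $x$'s facets lands on a $y$-facet) that the paper leaves implicit when it simply asserts ``$x$ is one pivot away from a vertex that shares a common facet with $y$.''
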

\begin{proof}
Immediate, by induction over $d$.  If the largest facet has no more than $(2d-2)$ subfacets then the every facet has diameter at most $(d-1)$.  Since, by the definition of a Dantzig figure, $x$ is one pivot away from a vertex that shares a common facet with $y$ the total distance from $x$ to $y$ is at most $d$.
\end{proof}

\begin{theorem}\label{induct}
The inductive step of the $d$-step conjecture is true\footnote{Subsequent work on our part has shown that many additional nontrivial cases arise that were not identified in this proof, particularly in higher dimensions.  The theorem may be shown to be valid even in these cases, but additional machinery is required.  These details are better addressed in a separate paper.} for all Dantzig figures such that $|T| \leq d$.
\end{theorem}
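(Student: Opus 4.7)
The plan is to exhibit an explicit path of length at most $d$ from $y$ to $x$ in $D$, combining distance bounds in the parent polytope $P$ with the combinatorial structure of the new facet $F$ described in Proposition \ref{combin}.

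First, observe that $x \in F$ has a unique neighbor in $D$ outside $F$: since $x$ arose at the intersection of the cutting hyperplane with some edge $vu$ of $P$, with $v \in T$ and $u \in S := P \setminus T$, the old vertex $u$ is adjacent to $x$ in $D$ along the unique $D$-edge leaving $F$ at $x$. Thus if one can show $d_D(y,u) \leq d-1$, then $d_D(y,x) \leq d$ follows immediately. To bound $d_D(y,u)$, I would start from a shortest $y$-to-$u$ path in $P$, which has length at most $d-1$ because $P$ has diameter at most $d-1$ by the inductive argument preceding this theorem. If such a path can be chosen to avoid $T$ entirely, it survives as a path in $D$ and we are done.

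In the remaining case, every shortest path must meet $T$, and we detour through the new facet $F$: whenever the path first enters $T$ at some vertex $w_j$, step from the predecessor $w_{j-1} \in S$ into its adjacent new vertex of $F$, traverse $F$ to an appropriate new vertex lying on an edge back into $S$, and continue. The combinatorial description of $F$ in Proposition \ref{combin}, together with the inductive hypothesis of the $d$-step conjecture applied to $F$ as a $(d-1)$-polytope, controls the diameter of $F$ and hence the cost of each detour. I would organize this as an induction on $|T|$, with base case $|T|=1$ given by the preceding proposition; the translation sequence guaranteed by Proposition \ref{translation} lets us relate $D_{T_k}$ to $D_{T_{k-1}}$ via a single application of Algorithm \ref{algo:sweep}, which simply grafts a $(d-2)$-simplex onto $F$, and should allow diameter bounds to propagate along the sequence.

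The hard part is the bookkeeping in the detour argument. A shortest $y$-to-$u$ path in $P$ may enter and exit $T$ multiple times, and one must argue that each forced detour through $F$ costs at most what is gained by the portion of the $P$-path that it replaces. The hypothesis $|T|\leq d$ is what makes this accounting feasible: it caps both the number of forced detours (since the cutset is connected and has few vertices) and the facet count of $F$, which via the inductive $d$-step conjecture bounds $\mathrm{diam}(F)$. I expect the principal obstacle to be producing a single uniform bound across all cutset topologies permitted by $|T|\leq d$; one likely needs separate subcases for the main structures (isolated vertex, path, tree, and cutsets containing a cycle), and to verify that the ``facet-free'' and ``complementary connectedness'' properties of Theorem \ref{props} suffice to rule out pathological interleavings of the $P$-path with $T$. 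This careful case split is almost certainly the source of the additional ``nontrivial cases'' foreshadowed in the footnote.
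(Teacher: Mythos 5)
Your approach is genuinely different from the paper's. The paper works entirely along a translation sequence: it establishes the bound for $|T|=1$ and then tracks how a \emph{specific} shortest $x$--$y$ path in $D_k$ changes when the hyperplane is pushed past one more vertex $v_k$, splitting into the case where $v_k$ lies on the path (the path loses two edges and gains two, so its length is preserved) and the case where only a neighbor $u$ of $v_k$ does (the length may grow by one, but the paper argues that since $F$ starts as a $(d-1)$-simplex there are $d-1$ vertex-disjoint paths through $F$ of equal length, and with $|T|<d$ not all of them can have been broken). Your plan instead fixes $u$ as the unique $D$-neighbor of $x$ off $F$ and tries to certify $d_D(y,u)\le d-1$ directly, starting from a shortest $y$--$u$ path in the parent $P$ and detouring through $F$ whenever the path enters $T$. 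The $u$-reduction is correct, clean, and sharper than anything the paper states explicitly.

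The gap is exactly where you flag it, and it is a real obstacle rather than routine bookkeeping. When a $P$-geodesic of length $\le d-1$ passes through a block of $m$ consecutive $T$-vertices, your reroute replaces an $(m+1)$-edge segment by an entry edge into $F$, an intra-$F$ path, and an exit edge, i.e., cost at most $2+\mathrm{diam}(F)$ against a savings of $m+1$. For $m=1$ this is a strict loss unless $\mathrm{diam}(F)=1$, i.e., unless $F$ is a simplex, which holds only when $|T|=1$. The hypothesis $|T|\le d$ neither caps $\mathrm{diam}(F)$ at $1$ nor bounds the number of forced detours tightly enough to make the totals balance in general, and the slack $d-1 - d_P(y,u)$ available to absorb the overrun can be zero. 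The paper sidesteps this by never rerouting an old path through $F$: it shows (modulo its own acknowledged gaps) that the same $x$--$y$ path survives each single-vertex sweep with only local edits. If you want to preserve your $u$-reduction, you would need a substitute for the detour estimate --- for instance exploiting that $y$ and $u$ always lie on a common facet of $P$ (since $P$ has $2d-1<2d$ facets) and working inside that facet --- but the same difficulty (a geodesic in the facet meeting $T$) reappears there, so some new idea is still required.
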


\begin{proof}
Consider now a Dantzig figure $D_1$ obtained by truncating a single vertex $v_1$ from a parent $(d,2d-1)$ polytope.  Observe that the distance from $y$ to $v_1$ is at most $d-1$; observe further that the vertices of $F$ are formed along the edges incident to $v_1$.  In particular, there exists at least one vertex $u_1$ on $F$ of distance $d-1$ from $y$.  Since $x$ lies on $F$ and $F$ is a $(d-1)$-simplex, $x$ lies at most distance 1 away from $y$.  

Thus a path of length $d$ exists from $x$ to $y$ passing from $x$ through $u_1$ to $y$.  Thus, the diameter of any such $D_1$ is at most $d$.

By the preceding theorem, we observe that any Dantzig figure $D_k$ may be obtained by translation of the hyperplane $H$ from an appropriately-chosen starting figure $D_1$.  Suppose that the diameter of some Dantzig figure $D_{k}$ in a translation sequence is $d$, and consider the Dantzig figure $D_{k+1}$ obtained from translating $H$ to truncate $v_{k}$, such that $k \leq d$. 

There are two cases to consider.

Case 1: If $v_{k}$ lies on the shortest path between $x$ and $y$ (say, along edges $(u,v_k)$ and $(v_k,w)$), then during the translation process, according to algorithm \ref{algo:sweep}, the following occurs:
\begin{enumerate}
\item{the vertex $v_{k}$ is replaced by a set of up to $(d-1)$ new vertices $\{v_k'\}$ with edges between them, since $H$ is perturbed to cross $(d-1)$ additional edges of $P$} 
\item{the edges $(v_k',w)$ are formed, since $v_k$ and $w$ lie on a common 2-face (indeed, a common edge)}
\item{the edges $(u_k,v_k')$ are formed for each $u_k$ connected to $u$, since the associated edges $(u,u_k)$ and $(v_k,w)$ share a $(d-2)$-face that includes the edge $u,v_k$}.
\item{the edges $(u,v_k)$ and $(v_k,w)$ are deleted since they refer to $v_k$}.
\end{enumerate}
Thus, the length of a shortest $x$-$y$ path passing through $v_k$ is unchanged, as it loses two edges and gains two edges.

Case 2: If $v_{k}$ does not lie on the shortest path between $x$ and $y$, but some vertex $u$ connected to $v_k$ does lie on the shortest $x$-$y$ path (say, using the path $t\rightarrow u \rightarrow s$, then according to algorithm \ref{algo:sweep} the following occurs:
\begin{enumerate}
\item{the vertex $v_{k}$ is replaced by a set of up to $(d-1)$ new vertices $\{v_k'\}$ with edges between them, since $H$ is perturbed to cross $(d-1)$ additional edges of $P$} 
\item{the edges $(v_k',w)$ are formed, since $v_k$ and $w$ lie on a common 2-face (indeed, a common edge)}
\item{the edges $(u_k,v_k')$ are formed for each $u_k$ connected to $u$, since the associated edges $(u,u_k)$ and $(v_k,w)$ share a $(d-2)$-face that includes the edge $u,v_k$}.
\item{the edges $(u,v_k)$ and $(v_k,w)$ are deleted since they refer to $v_k$}.
\end{enumerate}
Thus, the length of a shortest $x$-$y$ path passing through $u$ will increase by at most one, as the path through $(t,u,s)$ is replaced by the path $(t,u,v_k,s)$.  However, we can find an alternate path to vertex $s$ that bypasses $u$ entirely.  Observe that there exist $d-1$ neighbors of $x$ on the new facet $F_H$.  Observe that after the first perturbation, the new facet is a $(d-1)$-simplex, and so we have $d-1$ vertex-disjoint paths of equal length to reach vertex $s$ from $x$.  By induction, we may assume that not all of these paths are broken by a perturbation, since $T<d$, and we would need at least $d$ perturbations to affect a vertex from each path.  There thus exists at least one alternate path  $(x,x_2,...,u',s)$ whose length is not affected by this perturbation.

Thus, for at least the first $d$ vertices truncated, the $d-$step conjecture holds. $\qed$

\end{proof}

A recent preprint \cite{kettner} claims that the argument of the previous proof, which that author has derived separately from our work, is sufficient to imply the Hirsch conjecture in all cases.  The preceding argument was known to us for some time (eg, \cite{ieorseminar}), but is not sufficient to prove the Hirsch conjecture without additional assumptions. 

The ``push'' operation preserves the length of any path passing through the edge $e$ from $v_{k}$ to its neighbor $v' \in F$.  However, the ``push'' can strictly \emph{increases} the distance of any $x-y$ path passing through $v_{k}$ that does not pass through $e$.  This is illustrated in the figure below.  In particular, in \cite{kettner}, except in the cases where $(v',v_{k})$ lies on the diameter-defining path of a polytope, it is not implied that the diameter of a polytope is unchanged by a perturbation.

\begin{centering}
\begin{figure}[h]
\center
\includegraphics[scale=.60]{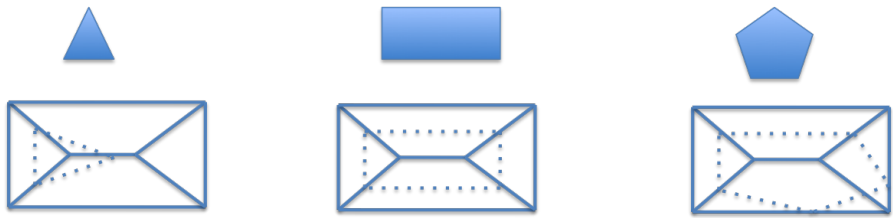}
\caption{A ``push" operation starting at a vertex increases the diameter of some paths going through that vertex and leaves others unchanged.  In particular, observe that the ``push" operation in the third picture increases the distance along a path.}
\end{figure}
\end{centering}

The preceding theorem preserved path length when the perturbed edge $v'-v_{k+1}$ lies on a shortest $x-y$ path.  In order for this argument to apply in cases $|T>d|$, it would need to be shown separately (for instance) that for every $v'$ neighboring $V_k$, is the member of \emph{some} shortest $x-y$ path in $D$.  This is not generally known to be true. 

\begin{theorem}
The $d$-step conjecture is true for $d<6$.
\end{theorem}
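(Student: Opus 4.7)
The plan is to proceed by induction on $d$, treating $d \in \{2, 3, 4, 5\}$ in turn, and using Theorem~\ref{induct} as the engine. The base case $d=2$ is immediate: a simple $(2,4)$-polytope is a quadrilateral with diameter $2$. For the inductive step at dimension $d$, I would assume the $d$-step conjecture for all $d' < d$, and then by the Klee-Walkup reduction it suffices to bound the diameter of arbitrary Dantzig figures $D$. By Theorem~\ref{kleelemma}, $D$ arises as $P \cap H$ for some parent $(d, 2d-1)$-polytope $P$ and halfspace $H$ truncating a cutset $T$. By Theorem~\ref{induct}, the inductive step holds whenever $|T| \leq d$, so the whole task collapses to proving $|T| \leq d$ for every Dantzig figure whose ambient dimension satisfies $d \in \{3, 4, 5\}$.

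To bound $|T|$ I would use the structural observation of Proposition~\ref{combin}: the new facet $F = H \cap P$ is a simple $(d-1)$-polytope whose vertex count equals $E(S, T)$, the number of edges of $P$ crossing the cut. Because $F$ is one of only $2d$ facets of $D$, it is a $(d-1, n_F)$-polytope with $n_F \leq 2d - 1$, which bounds $|F| = E(S, T)$ via the Upper Bound Theorem applied in dimension $d-1$. Simplicity of $P$ then gives that each $v \in T$ contributes exactly $d$ incident edges in $P$, at most $d - |N(v) \cap T|$ of which reach $S$; combining this with the connectedness and facet-free requirements on $T$ from Theorem~\ref{props}, I would squeeze out $|T| \leq d$ in each of $d \in \{3, 4, 5\}$.

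For $d = 3$, the bound is essentially free: the only simple $(3,5)$-parent polytope is the triangular prism, and a direct check of its connected, facet-free vertex subsets gives $|T| \leq 3$. For $d = 4$, the simple $(4,7)$-polytopes form a short finite list (generable by Algorithm~\ref{enumerator}), and the $|T| \leq 4$ bound can be verified by a modest case analysis over their automorphism orbits of connected cutsets. For $d = 5$, which I expect to be the main obstacle, the parent $(5,9)$-polytopes are substantially richer and the structural counting above may leave a handful of borderline cases unresolved. In those residual cases I would fall back on the enumeration algorithm of Section~5 to exhaustively generate all $(5,10)$-Dantzig figures, use Proposition~\ref{translation} to place each inside some translation sequence, and then either verify $|T| \leq 5$ at some stage of that sequence or check the diameter bound directly on the finite list produced.

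The hard part is the $d=5$ argument: the combinatorial zoo of $(5,9)$-parents is large enough that a purely inequality-based derivation of $|T| \leq 5$ is delicate, and one must be careful not to double-count orbits or to miss Dantzig figures that only arise from truncating non-minimal cutsets. The enumerative fallback is tractable because $n = 10, d = 5$ is within reach of the algorithm of Section~5, but the argument then leans on a machine check for completeness rather than a closed-form combinatorial bound.
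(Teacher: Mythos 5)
Your overall skeleton is the same as the paper's: reduce via Klee--Walkup to Dantzig figures, invoke Theorem~\ref{kleelemma} to write each Dantzig figure as a cut of a $(d,2d-1)$-parent, and then feed everything into Theorem~\ref{induct}, so that the whole theorem rests on showing $|T|\le d$ for every Dantzig figure with $d<6$. The difference is in how that bound is obtained, and this is where your proposal has a genuine gap: you never actually establish $|T|\le d$ for $d=4$ and $d=5$. The paper does it in one uniform stroke, with no case analysis: by Theorem~\ref{props} the complement of the cutset must stay connected, which forces the truncated set to contain strictly fewer than $|V|/2$ vertices of the parent, and it then appeals to the upper bound theorem to claim $|V|/2<d$ for $(d,2d-1)$-polytopes with $d<6$, after which Theorem~\ref{induct} finishes. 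Your replacement argument does not close this step. Your counting via Proposition~\ref{combin} bounds $E(S,T)$, the number of \emph{new} vertices on the facet $F$, not $|T|$ itself; converting one to the other through simplicity and connectedness of $T$ gives roughly $|T|(d-2)+2\le E(S,T)$, and with $F$ a simple $(d-1)$-polytope with at most $2d-1$ facets the upper bound theorem allows $E(S,T)$ to be as large as $27$ when $d=5$, which only yields $|T|\le 8$, not $|T|\le 5$. You half-acknowledge this (``a handful of borderline cases unresolved''), but the fallback you offer --- exhaustively enumerating all $(5,10)$-Dantzig figures and checking either $|T|\le 5$ or the diameter directly by machine --- is proposed, not performed, so the statement is not proved as written.

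Two further cautions. First, ``the only simple $(3,5)$-parent is the triangular prism'' is not quite right (the $(3,5)$ class also contains the polytope obtained by truncating one vertex of the tetrahedron), though the $d=3$ case is easy either way. Second, if you do pursue the computational route for $d=4,5$, you are no longer giving the kind of closed-form inductive argument the paper intends; that is a legitimate alternative in principle, but it changes the character of the proof and still requires the enumeration to be carried out and certified complete (which in turn leans on the paper's unproven realizability conjecture for cutsets in $d\ge 4$). To repair your write-up along the paper's lines, replace the facet-counting heuristic with a direct bound on $|T|$ coming from the connectivity requirement of Theorem~\ref{props} applied to the parent's vertex set, and only then invoke Theorem~\ref{induct}.
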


\begin{proof}
The number of vertices of a Dantzig figure that may be truncated by a hyperplane is strictly less than $|V|/2$, as any set of $|V|/2$ vertices disconnects a graph of size $|V|$, violating Proposition \ref{props}.  The upper bound theorem shows that $|V|/2 < d$ for any $(d,2d-1)$ polytope with $d<6$; hence, Theorem \ref{induct} implies the result.
\end{proof}


\begin{theorem}
This family of polytopes satisfy the $d$-step conjecture.
\end{theorem}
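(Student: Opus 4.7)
The plan is to identify the family in question as the Dantzig figures $D$ admitting a representation $D = P \cap H^+$ where the cutting halfspace $H^+$ truncates a connected vertex set $T$ from the parent $(d,2d-1)$-polytope $P$ with $|T| \leq d$, and then to reduce to Theorem \ref{induct} via the translation-sequence machinery of Proposition \ref{translation}. Since the preceding corollary used the bound $|V|/2 < d$ only to conclude $|T| \leq d$, carving out this family directly is the natural generalization: it includes all Dantzig figures in dimensions below $6$ but is strictly larger.

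First, I would fix $D$ in the family and apply Proposition \ref{translation} to realize $D$ as the final figure $D_{|T|}$ in a translation sequence $\{D_{T_i}\}_{i=1}^{|T|}$ starting from a single-vertex truncation $D_1$. The base case diameter bound of $d$ on $D_1$ follows immediately from the proposition proved earlier for single-vertex Dantzig figures. Next, I would propagate a diameter invariant along the translation sequence by appealing to the case analysis inside the proof of Theorem \ref{induct}: at each sweep step either the perturbed vertex lies on a current shortest $x$-$y$ path, in which case Case 1 shows the length is preserved, or it does not, in which case Case 2 supplies an alternate path via one of the $d-1$ vertex-disjoint routes through the simplicial portion of the new facet incident to $x$. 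Because the number of perturbations in the sequence is $|T| \leq d$, a pigeonhole argument rules out the possibility that every such disjoint path has been broken, so a surviving $x$-$y$ path of length at most $d$ persists in $D$.

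Finally, I would verify that Proposition \ref{combin} guarantees the structural ingredients used above, namely that after each perturbation the newly created vertices form a configuration whose local graph around $x$ retains $d-1$ neighbors on the new facet. The main obstacle, and the point at which the author's footnote on Theorem \ref{induct} explicitly flags extra care, is maintaining the vertex-disjoint-paths invariant across the full sequence: after the first truncation the new facet is a simplex, but after later truncations it becomes more intricate, and the $d-1$ initial disjoint paths from $x$ can interact with subsequent perturbations in nontrivial ways. A clean proof will require tracking an invariant of the form ``at step $i$, at least $d-i$ of the original disjoint $x$-$y$ paths of length at most $d$ survive,'' and confirming that each sweep step decrements this count by at most one. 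Establishing that invariant, rather than the base case or the surrounding reduction, will be the crux of the argument.
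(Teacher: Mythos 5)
The paper supplies no proof for this theorem at all: as printed, the statement stands alone with no proof environment following it, and the phrase ``this family'' has no identifiable antecedent in the surrounding text. It reads as orphaned or placeholder material wedged between the $d<6$ corollary and the subsection on sufficient statements. Your reconstruction therefore cannot be compared against a paper proof that does not exist; what can be assessed is whether your filling-in is plausible.

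Your identification of the family as Dantzig figures admitting a decomposition $D = P \cap H^+$ with $|T|\leq d$ is the most natural reading consistent with Theorem~\ref{induct} and the $d<6$ corollary immediately preceding, and under that reading the statement is simply a restatement of Theorem~\ref{induct}. Your argument is exactly the intended reduction: realize $D$ at the end of a translation sequence via Proposition~\ref{translation}, handle the base case with the single-vertex truncation result, and propagate through the two-case analysis from Theorem~\ref{induct}. You have also correctly located the genuine difficulty, which the paper's own footnote on Theorem~\ref{induct} concedes: the ``$d-1$ vertex-disjoint paths, at most one lost per sweep'' step is established only for the first perturbation, when the new facet is a simplex. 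The invariant you propose --- that at least $d-i$ of the disjoint $x$-$y$ paths of length at most $d$ survive after $i$ truncations, with each sweep decrementing the count by at most one --- is precisely the missing lemma. Nothing in the case analysis of Theorem~\ref{induct} shows that a single later truncation cannot lie on, and lengthen, several of the original disjoint paths at once, nor that the surviving paths remain shortest after intermediate sweeps alter the graph near the new facet. So your proposal is a faithful reconstruction of what the paper appears to intend, but it inherits, and honestly names, the same gap the paper's footnote admits is unresolved; neither constitutes a complete proof.
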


\subsubsection{Sufficient statements for the $d$-step conjecture}
In this section, we present three potential properties of polytopes that would be sufficient to imply the $d$-step conjecture.  

For each potential property, we discuss a conjecture, and then present a proof of the $d$-step conjecture stemming from that conjecture.


\begin{conjecture}
Every edge adjacent to the new facet $F_N$ in a translation sequence is the member of some shortest path from $x$ to $y$.
\end{conjecture}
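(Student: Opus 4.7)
The plan is to prove the conjecture by induction on the translation sequence, building on Proposition \ref{combin} and Algorithm \ref{algo:sweep}. Let $D_k$ be the $k$-th Dantzig figure in the sequence, with new facet $F_N^{(k)}$ and diameter $\Delta_k$; the invariant to establish is that every edge incident to a vertex of $F_N^{(k)}$ belongs to some $x$-to-$y$ geodesic of length $\Delta_k$.

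For the base case $k=1$, the new facet is a $(d-1)$-simplex with vertices $v_1', \ldots, v_d'$ generated at the intersections of $H$ with the edges incident to the single truncated vertex $v_1$ of the parent polytope $P$. Assume $x = v_1'$, with exterior neighbor $u_1$. The key subclaim to establish is a distance relation of the form $d(u_1, y) = \Delta_1 - 1$ and $d(u_i, y) = \Delta_1 - 2$ for $i \neq 1$, from which each edge $(v_i', u_i)$ lies on the shortest path $x \to v_i' \to u_i \to \cdots \to y$, and each intra-simplex edge $(x, v_i')$ is an initial step of such a path. This distance relation would be argued by examining the shortest-path structure of the parent polytope $P$ from $v_1$ to $y$ and showing that removing $v_1$ and adding the simplex $F_N^{(1)}$ preserves the relative distances from each $u_i$ to $y$ in a controlled way.

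For the inductive step from $D_k$ to $D_{k+1}$, I would invoke Algorithm \ref{algo:sweep} to track the addition, deletion, and preservation of edges incident to $F_N$ as $H$ is perturbed to truncate $v_{k+1}$. The sweep replaces $v_{k+1}$ with up to $d-1$ new vertices attaching to $F_N^{(k+1)}$ in a local $(d-2)$-simplex pattern. For each resulting edge, I would either match it to a pre-perturbation edge in $D_k$ (whose shortest-path membership is provided by the inductive hypothesis) or analyze it directly, using the local simplicial structure of the sweep together with the length-preservation arguments of Theorem \ref{induct}.

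The main obstacle is both establishing and maintaining the distance relations: a priori, the sweep may create shortcuts reducing some $d(u_i, y)$ values below the expected $\Delta_k - 2$, or may increase other distances, and neither behavior is easily ruled out by the local nature of the sweep. Moreover, after multiple cuts, $F_N$ ceases to be a simplex, its combinatorial type depending on the graph structure of the truncated set per Proposition \ref{combin}, so the clean simplex-based argument of the base case does not extend verbatim. An alternative strategy would be to recast the shortest-path structure as tightness constraints in a linear program and argue via LP duality that the required edge conditions are preserved by the sweep, though identifying the correct LP formulation is itself a nontrivial task.
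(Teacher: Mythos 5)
The first thing to observe is a mismatch between what you are proving and what the paper's own ``proof'' block establishes. In the paper, this statement appears in the subsection titled ``Sufficient statements for the $d$-step conjecture,'' where the authors explicitly say: ``For each potential property, we discuss a conjecture, and then present a proof of the $d$-step conjecture stemming from that conjecture.'' Accordingly, the text inside the proof environment under this conjecture does \emph{not} prove the conjecture itself; it merely observes that \emph{if} every perturbation lies along a shortest path, \emph{then} by the argument of Theorem~\ref{induct} the diameter of a $(d,2d)$ polytope is at most $d$. The conjecture is deliberately left open in the paper. You, by contrast, are trying to prove the conjecture itself by induction on the translation sequence. That is a genuinely different, and considerably harder, task than what the paper attempts.

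As to the substance of your attempt: it has real gaps, several of which you honestly flag yourself. The base-case distance relation $d(u_1,y)=\Delta_1-1$ and $d(u_i,y)=\Delta_1-2$ for $i\neq 1$ is asserted but not derived; there is no reason in general why the exterior neighbors $u_i$ of the new simplex should stratify so cleanly by distance from $y$, and a single counterexample there collapses the base case. In the inductive step, you correctly identify the two failure modes --- the sweep could create a shortcut that shrinks some $d(u_i,y)$, or it could lengthen a geodesic in a way that knocks a given edge off every shortest path --- but you do not rule either one out, and Algorithm~\ref{algo:sweep} alone does not give you the global control over shortest-path structure you would need to do so. (Indeed Theorem~\ref{induct} itself only controls the particular shortest path being tracked and only for $|T|\leq d$; it does not say that \emph{every} edge near $F_N$ stays on \emph{some} geodesic.) The LP-duality idea is a direction, not an argument. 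In short, you have not proved the conjecture; you have sketched an approach with acknowledged holes. That said, since the paper also leaves the conjecture unproved, the ``gap'' here is less that you missed the paper's argument and more that you should recognize the paper is only asserting the implication (conjecture $\Rightarrow$ $d$-step), which is the cheap direction. If you want to match the paper, prove that implication via Theorem~\ref{induct}; if you want to go further and actually establish the conjecture, you will need a substantially stronger handle on how geodesic distances behave under the sweep than either you or the paper currently has.
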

\begin{proof}
We apply the argument of theorem \ref{induct}.  That theorem showed that all perturbations along a shortest path preserve their length.  Hence, if every possible perturbation is along a shortest path, the diameter of a $(d,2d)$ polytope is at most $d$.  
\end{proof}

\begin{definition}
A set of paths $\{X_1,X_2,...,X_N\}$ from $x$ to $y$ \emph{span} a (polytopal) graph if removing any one vertex $v_i$ from every path $x_i$ disconnects the (polytope's) graph. 
\end{definition}

\begin{conjecture} 
The set of $(x,y)$ paths of a $(d,2d-1)$ polytope of length $(d-1)$ or $(d)$ span the polytope for any pair $(x,y)$.
\end{conjecture}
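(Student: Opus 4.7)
The plan is to deduce the conjecture from Balinski's theorem, Menger's theorem, and the diameter bound already supplied by the inductive machinery of the $d$-step conjecture. The strategy is to produce $d$ internally vertex-disjoint $x$-$y$ paths, each of length at most $d$, and then to observe that every transversal of the full family of short paths must meet these $d$ disjoint paths, yielding a vertex cut by Menger.

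First, I would verify that every pair of vertices in a $(d,2d-1)$ polytope is at distance at most $d-1$. Since $n = 2d-1 < 2d$, any two vertices share a common facet, and every facet is a $(d-1)$-polytope with at most $2(d-1)$ subfacets. By the inductive hypothesis of the $d$-step conjecture applied in dimension $d-1$, each such facet has diameter at most $d-1$, so the same bound holds for the ambient polytope. In particular, the shortest $x$-$y$ path has length at most $d-1$, and the family $\mathcal{F}$ of $x$-$y$ paths of length $d-1$ or $d$ consists exactly of the shortest paths together with paths at most one edge longer than shortest.

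Next, I would invoke Balinski's theorem to conclude that $G_P$ is $d$-vertex-connected, and apply Menger's theorem to extract $d$ internally vertex-disjoint $x$-$y$ paths $X_1, \ldots, X_d$. The technical core of the proof is to refine this system so that each $X_i$ has length at most $d$. My approach would be a local exchange argument: whenever some $X_i$ exceeds length $d$, identify a detour through a face whose diameter is controlled by the inductive hypothesis and substitute the detour into $X_i$ while verifying that disjointness from the other $d-1$ paths is preserved. This can be made precise by tracking the face lattice locally and exploiting simplicity to rule out conflicts; termination follows from monotone decrease in total path length. Given such a refined system, the spanning property is immediate, because any choice of one interior vertex from every path in $\mathcal{F}$ contains in particular one interior vertex from each $X_i$, and by Menger the resulting set of $d$ vertices is a cut separating $x$ from $y$.

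The hard part will be the length-control step: Menger's theorem supplies disjoint paths but places no bound on their lengths, and naive rerouting may introduce collisions with other paths in the system. A natural fallback is to employ the translation sequence machinery of Proposition \ref{translation}, tracking how $d$ disjoint short paths evolve as a hyperplane sweeps through the parent polytope and reducing length control to an invariant preserved by the push operations analyzed in the proof of Theorem \ref{induct}. If this refined approach still fails, the conjecture may need to be relaxed — for instance by permitting paths of length up to $d+1$ — at the cost of weakening the $d$-step consequences it is meant to imply.
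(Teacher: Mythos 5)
The paper does not actually prove this conjecture. It is presented explicitly as a \emph{conjecture}, and the ``proof'' block that follows it in the source establishes only the implication ``spanning conjecture $\Rightarrow$ $d$-step conjecture'' (via Theorem~\ref{props}: a truncating hyperplane cannot disconnect the graph, so it cannot meet every short $(x,y)$ path). The section heading makes this plain: these are \emph{sufficient statements for the $d$-step conjecture}, each followed by a derivation of the $d$-step conjecture from the assumed property. So you are attempting something the authors deliberately leave open, and the two arguments are not comparable.

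Taken on its own terms, your proposal has a fatal logical gap in its final step. You extract, via Balinski and Menger, $d$ internally vertex-disjoint short $x$-$y$ paths $X_1,\dots,X_d$, and then claim that any transversal of the family $\mathcal{F}$ picks one interior vertex from each $X_i$ and that this $d$-element set is ``by Menger'' a vertex cut. Menger's theorem asserts that the maximum number of disjoint paths equals the minimum size of a vertex cut; it does not assert that an arbitrary system of representatives of a maximum disjoint-path family is a cut. A simple example: $x$ and $y$ joined by two disjoint paths $x$-$a$-$b$-$y$ and $x$-$c$-$e$-$y$ together with a chord $b$-$c$; connectivity is $2$, but removing the representatives $\{a,e\}$ leaves the path $x$-$c$-$b$-$y$ intact. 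The spanning property you need is far stronger than $d$-connectivity plus a diameter bound: it demands that \emph{every} transversal of $\mathcal{F}$ kills \emph{every} $x$-$y$ path, equivalently that each $x$-$y$ path in $G_P$ contains the entire interior vertex set of some member of $\mathcal{F}$, and no amount of disjoint-path extraction or local path-shortening addresses that. Your length-control difficulty (Menger paths may be long, and rerouting can break disjointness) is real but secondary; even a perfect resolution of it would not rescue the final inference.
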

\begin{proof}
If the set of minimal $(x,y)$ paths span the polytope, then any hyperplane removing all of them will disconnect the polytope, contradicting the requirement of proposition \ref{props} that no hyperplane cut can disconnect a polytope.  The preceding theorem implies that the set of all $(x,y)$ paths in a $(d,2d-1)$ polytope of length at most $d$ span the polytope.   Hence, no cut can cross all $d$-length paths.  Hence, at least one $(x,y)$ path of length at most $d$ is preserved in any Dantzig figure.
\end{proof}






\begin{corollary}
Conjectured: Spanning d-step conjecture: the set of minimal paths of $D$ span $D$.
\end{corollary}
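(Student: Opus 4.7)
The plan is to derive this corollary from the immediately preceding conjecture via the parent-polytope construction. By Theorem~\ref{kleelemma}, any Dantzig figure $D$ is combinatorially equivalent to $P \cap H^+$ for some parent $(d,2d-1)$-polytope $P$ with cutset $T \subset V(P)$. Since $x$ and $y$ share no common facet in $D$, one of them --- say $x$ --- must lie on the new facet $F_H$ while $y$ is inherited from $P$ as an old vertex. The preceding conjecture applied to $P$ asserts that length-$(d-1)$ and length-$d$ paths from any fixed $x^\ast \in V(P)$ to $y$ span $P$. I would choose $x^\ast \in T$ to be a vertex whose truncation produces a neighbor of $x$ on $F_H$, and in the multi-truncation case use Proposition~\ref{translation} to build a translation sequence, adding vertices of $T$ one at a time.

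Next, I would establish a correspondence between spanning paths in $P$ and minimal $(x,y)$-paths in $D$. Under a single push truncating $x^\ast$, each edge $(x^\ast, u)$ with $u \notin T$ is replaced by an edge $(v', u)$ for some $v' \in F_H$ adjacent to $x$; consequently, a length-$k$ path $x^\ast \to u \to \cdots \to y$ in $P$ descends to an $(x,y)$-path in $D$ of length at most $k+1$, traversing at most the single simplex edge $(x, v')$ on $F_H$ and then the image of the original path in $D$. Assuming the $d$-step conjecture so that minimal $(x,y)$-paths in $D$ have length at most $d$, the image of the length-$(d-1)$ paths in $P$ falls within the minimal class in $D$.

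Finally, to establish the spanning property in $D$, I would argue by contradiction: suppose some $S \subset V(D)$ contains one vertex from every minimal $(x,y)$-path. Lifting $S$ to $\hat S \subset V(P)$ by replacing each new vertex on $F_H$ with its parent in $T$, one obtains a set that must intersect every length-$(d-1)$ $(x^\ast,y)$-path in $P$, contradicting the preceding conjecture. The main obstacle will be handling the length-$d$ paths in $P$ that lift to length-$(d+1)$ paths in $D$ and therefore fall outside the minimal class in $D$: one must show either that such paths are effectively shortened by the cut via edges introduced on the new facet (in the spirit of the case analysis in Theorem~\ref{induct}), or that spanning restricted to length-$(d-1)$ paths in $P$ alone suffices. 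A careful combinatorial accounting using Lemma~\ref{k2}, tracking exactly which parent paths get shortened versus lengthened under the push, will be the technical heart of closing this gap.
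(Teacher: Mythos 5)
The paper offers no proof of this corollary: it is explicitly labeled ``Conjectured'' in the statement itself and no proof environment follows it, so there is no argument of the authors' to compare yours against. Your proposal is therefore a genuinely new attempt, and the obstacle you acknowledge at the end is the substantive one. The preceding conjecture asserts spanning of the \emph{parent} $(d,2d-1)$ polytope $P$ by $(x,y)$-paths of length $d-1$ or $d$, while this corollary claims spanning of the Dantzig figure $D$ by $D$-minimal $(x,y)$-paths; the cut operation does not in general carry length-$d$ paths in $P$ to paths of length at most $d$ in $D$ (that is precisely what the $d$-step conjecture would assert, and cannot be assumed here), so the minimal classes in $P$ and $D$ do not line up.

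There is a further defect in the lifting step that you do not address. When you form $\hat S$ by replacing new vertices of $F_H$ with their parents in $T$, several new vertices may map to the same parent, and old vertices of $S$ are carried over unchanged; a length-$(d-1)$ $(x^*,y)$-path in $P$ that happens to avoid both $T$ and the old-vertex part of $S$ is therefore not hit by $\hat S$ at all, unless its image in $D$ is a minimal $(x,y)$-path that $S$ must hit --- which is exactly the length-control question left open. Without closing that loop the contradiction does not fire. Your sketch is a plausible program in the spirit of Theorem~\ref{induct} and Lemma~\ref{k2}, but it is not yet a proof, consistent with the authors' own choice to leave the statement as a conjecture.
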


\begin{conjecture}\label{contiguous}
No cutset $C$ may contain vertices $u$ and $v$ such that $u,v$ lie in a common facet of $F$ and there does not exist a connected path from $u$ to $v$ within $C\cap F$
\end{conjecture}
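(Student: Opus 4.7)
The plan is to reduce the statement to a standard fact about linear programming on polytopes: for any polytope $Q$ and any halfspace $H^-$, the set of vertices of $Q$ lying in $H^-$ induces a connected subgraph of the 1-skeleton of $Q$. Applying this to $Q = F$ (viewing the facet as a $(d-1)$-polytope in its own right) immediately gives the conjecture, since by definition $C \cap F$ is exactly the vertex set of $F$ that lies in the cutting halfspace $H^-$ separating $C$ from the rest of $P$.

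First I would fix a realization of $P$ together with a separating halfspace $H^- = \{x : c^\top x \leq b\}$ witnessing that $C$ is a realizable cutset. Let $u,v \in C \cap F$, and consider the linear functional $\ell(x) = c^\top x$ restricted to the facet $F$. Apply the simplex method on $F$ starting from $u$, pivoting greedily to strictly decrease $\ell$. This produces a path $u = u_0, u_1, \ldots, u_k = v^*$ along edges of $F$, terminating at a vertex $v^*$ that globally minimizes $\ell$ on $F$ (since $\ell$ is linear, local minima on the vertex graph of a polytope are global). Along this path $\ell$ is monotonically decreasing, so every vertex satisfies $\ell(u_i) \leq \ell(u) \leq b$, placing it in $H^- \cap V(F) = C \cap F$. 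Running the same argument from $v$ and concatenating the two monotone paths at $v^*$ yields a path from $u$ to $v$ entirely within $C \cap F$.

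The main subtle point to handle carefully is degeneracy: the simplex-method argument really produces a strictly monotone edge-path only when $\ell$ is generic on $V(F)$, because otherwise a pivot step might stall at a non-optimal vertex with no strictly improving neighbor. I would resolve this by a small perturbation of $c$ to some nearby $\tilde c$ that is generic on $V(F)$ yet keeps every vertex of $F$ on the same side of the hyperplane as before (possible because $C \cap F$ and $F \setminus C$ are finite sets separated by a positive margin from $\overline{H}$). Equivalently, one can appeal directly to the sublevel-connectivity theorem for polytopes, which is itself proved by this same perturbation-plus-simplex argument.

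I do not expect any serious obstacle beyond the degeneracy issue above; the core content is purely convex-geometric and follows the same template as the proof of the complementary-connectedness condition in Theorem \ref{props}, simply applied one dimension down to the facet $F$ rather than to $P$ itself. This also explains the footnote's assertion that the stronger condition follows by ``a convexity argument,'' and indeed the argument iterates: the same reasoning shows that no $k$-face of $P$ can be disconnected by $C$, for every $k$.
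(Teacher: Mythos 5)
Your proof is correct. It is worth noting, however, that the paper does not itself contain a proof of this statement: the proof block that follows the conjecture in the source derives the nonrevisiting-path property \emph{from} the conjecture rather than establishing the conjecture itself, so the statement is left as an unproved hypothesis. The only place the paper speaks to its truth is the footnote attached to Theorem~\ref{props}, which asserts that the stronger condition (no subgraph corresponding to a face may be disconnected by a realizable cutset) ``may be easily verified by a convexity argument'' without giving one. Your reduction supplies exactly that argument: treat the facet $F$ as a polytope in its own right, observe that $C\cap F$ is the sublevel set of the linear functional $\ell(x)=c^\top x$ on the vertices of $F$, and run a monotone pivot path from each of $u$ and $v$ down to the $\ell$-minimizer of $F$; every vertex encountered satisfies $\ell \le \ell(u) < b$ and hence lies in $C\cap F$. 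The degeneracy fix is correct and necessary --- since $C$ is realizable, the bounding hyperplane $\overline{H}$ misses every vertex of $P$, so the $\ell$-values of $C\cap F$ and $F\setminus C$ are separated by a positive margin and a small generic perturbation of $c$ preserves the partition while eliminating ties. Your closing remark that the identical argument applies to any $k$-face, recovering the footnote's stronger form, is also right, and is in fact the version the paper later relies on when it invokes the conjecture for $(d-2)$-faces in the nonrevisiting-path derivation.
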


\begin{proof}
We prove the nonrevisiting path conjecture, that any two vertices of an n-facet polytope $P$ may be joined by a path that does not revisit any facet of $P$.

We induct over the number of facets of $P$.  Plainly, the theorem is true for a $d-$simplex.  Now suppose we have an $n-$facet polytope where the nonrevisiting path conjecture holds.  Consider the polytope $P'$ obtained from truncating a single vertex $v_1$.  By theorem \ref{combin}, $v_1$ is replaced by a new facet $F_N = \{v'_{1},...,v'_{d+1}\}$ that is a $d-$simplex in $P'$. 

Any nonrevisiting path containing the subpath $...v_0,v_1,v_2...$ passing through vertex $v_0$ may thus be replaced by a path $v_0,v'_{i},v'_{j},v_2$.  This path will remain nonrevisiting, as it contains one additional edge $v'_{i}\rightarrow v'_{j}$ and visits one additional new facet $F_N$ which was not visited.

Now consider a translation sequence as in Proposition \ref{translation} and suppose that polytope $P_k$ satisfies the nonrevisiting path conjecture.  Consider the vertex $v_k$ next cut during the translation and observe how $P_k$ is modified during the push as in algorithm \ref{algo:main2}.  New vertices $v_i'$ are formed along each edge $(v_i,v_k)$; each pair of vertices of $v_i'$ are connected a single edge and lie on the common facets $F_N$ and $F_k$.  We show that all subpaths through $v_k$ in $P_k$ remain nonrevisiting in $P_{k+1}$.  There are three subpaths to consider:

\begin{enumerate}
\item{$u_1,u_k,u_2$:  The subpath $u_1,u_k,u_2$ may be replaced by the subpath $u_1,v_1',v_2',u_2$.  This path has one additional edge $(v_1',v_2')$ and visits the additional facet $F'$ shared by vertices $v_1',v_2'$.  

It must also be shown that $F'$ has not already been visited by any path passing through $u_1,u_k,u2$ in $P$.  Suppose for a contradiction that it has.  Clearly, it was not visited by vertices $u_1$ or $u_2$ since they do not lie on $F'$. Therefore it must have been visited by some vertex $x$ in a distinct part of the path.  But if vertices $x$, $v_1'$,and $v_2'$ are all part of $F_N$ and $F'$, then they must lie on a common $(d-2)$-face of $P$, violating conjecture \ref{contiguous}.
}
\item{$u_1,u_k,v_k,v_1$: This path loses one edge and gains one edge, as discussed in the proof of theorem \ref{induct}.  Consequently, it replaces one newly-visited facet with another.}
\item{$v_1,v_k,v_2$: This path gains one additional vertex and one additional facet; however, by conjecture \ref{contiguous} this new facet has not yet been visited. }
\end{enumerate}

\end{proof}

\subsection{Closed-form bounds on the number of $(n,d)$ polytopes}

[pending]

\subsection{Edge expansion of polytopal graphs}
 Recall that the edge expansion of a graph $G=(V,E)$ is defined as 

\begin{equation}
h(G) = \min_{1\leq |S|\leq\frac{|V|}{2}} \frac{|\delta(S)|}{|S|}
\end{equation}

where $S\subset V$ and $\delta(S)\subset E$ is the set of edges with exactly one endpoint in $V$.  Recall also that the vertex expansion of $G$ is defined as

\begin{equation}
g(G) = \min_{1 \leq |A|\leq \frac{V}{2}} \frac{|\Gamma(A)|}{|A|}
\end{equation}

where $A \subset V$ and $\Gamma(A) \subset |V|$ is the set of vertex neighbors of $A$, ie, $\{v \in (V \backslash A):\exists u\in A, (u,v)\in E\}$.

It has been shown via the probabilistic method that there exist classes of polytopes with poor vertex expansion \cite{mihail}.  In particular, it is conjectured that removing a sub-linear number of vertices from any polytopal graph can substantially disconnect the graph \cite{kalai-book}.  Kalai gives a version of this conjecture as follows:

\begin{conjecture}
Let $P$ be a simple $d$-polytope with $m$ vertices.  There exists a subset $V'$ of vertices of $P$ such that:
\begin{enumerate}
\item $|V'| = O(n^{1-\frac{1}{d-1}})$
\item Removing $V'$ from $G_P$ separates $G_P$ into two parts, each with at least $n/3$ vertices.
\end{enumerate}
\end{conjecture}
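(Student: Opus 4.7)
The plan is to combine the inductive cutting framework of Theorem \ref{kleelemma} with a hyperplane sweeping argument to exhibit the required separator as the set of vertices of $G_P$ adjacent to a well-chosen cross-section. First, I would observe that for any simple $d$-polytope $P$ with $m$ vertices and any affine hyperplane $H$ in general position, the vertex set partitions as $V_H^+ \cup V_H^-$, and the set $V' = \{v \in V(P) : v \text{ is incident to an edge of } P \text{ crossing } H\}$ is a vertex separator: by convexity, every edge joining $V_H^+ \setminus V'$ to $V_H^- \setminus V'$ would have to cross $H$, contradicting the definition of $V'$. Moreover, $|V'|$ is at most twice the number of vertices of the $(d-1)$-polytope $H \cap P$, since each such vertex sits on exactly one edge of $P$ and contributes at most two endpoints to $V'$.

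Next, to guarantee the balance condition (2), I would sweep $H = H_t$ along a fixed normal direction through $P$ from one extreme to the other, using the push algorithm \ref{algo:sweep} to track how the cross-section and its incident vertices evolve combinatorially. As $t$ varies, $|V_{H_t}^+|$ increases monotonically in unit steps from $0$ to $m$, so by a discrete intermediate-value argument there exists $t^{*}$ at which both $|V_{H_{t^{*}}}^+|$ and $|V_{H_{t^{*}}}^-|$ exceed $m/3$. The corresponding $V'$ then splits $G_P \setminus V'$ into two pieces of at least $m/3 - |V'|$ vertices each, which is the required balance provided $|V'|$ is sufficiently sublinear.

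The main obstacle is bounding $|V'| = O(m^{1 - 1/(d-1)})$ at a balanced sweep position. I would approach this by averaging over many choices of sweep direction, drawn for instance uniformly from the sphere or from the normals to the facets of $P$. For each direction, an $m/3$-balanced sweep must exist, and the total count of incidences between vertices of $P$ and their crossing edges (summed over all balanced positions in all directions) can be bounded above via the upper bound theorem combined with Dehn-Sommerville-type identities, since each $k$-face of $P$ is cut by a hyperplane of generic direction with controlled frequency. Averaging these counts over directions and positions should produce at least one sweep whose separator meets the target exponent $(d-2)/(d-1)$.

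The hard part, and the reason the conjecture remains open, is making this averaging argument tight in the purely combinatorial setting of simple polytopes. The push and cut algorithms of Section 4 give the machinery to update cross-sections and verify the separator property inductively, but converting the volumetric intuition (that a $(d-1)$-dimensional slice of a $d$-dimensional convex body has measure scaling like the $(d-2)/(d-1)$ power of its volume) into a graph-theoretic bound on $|V'|$ requires a polytopal isoperimetric inequality that is essentially equivalent to Kalai's conjecture itself. I would therefore expect a full proof to require, in addition to the sweeping framework above, a new structural lemma describing how the cutting operation of Theorem \ref{kleelemma} bounds the growth of minimum-balanced-separator size as facets are added one at a time.
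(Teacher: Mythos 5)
First, note that the paper contains no proof of this statement: it is quoted as Kalai's vertex-separator conjecture and left open, so there is nothing for your proposal to be measured against except its own completeness, and by your own admission it is not complete. The qualitative skeleton you build is fine: for a generic sweep direction the vertex heights are distinct, so a position with at least $m/3$ vertices on each side exists, and the set $V'$ of endpoints of edges crossed by $H$ does separate $G_P$, with $|V'|$ at most twice the number of vertices of the section $H\cap P$. The genuine gap is the size bound, and the averaging you propose cannot supply it. A simple $d$-polytope has exactly $dm/2$ edges, and if you sum the number of crossed edges over the $m-1$ gaps of a fixed sweep, each edge contributes the number of gaps it spans, which can be $\Theta(m)$; the total is then $\Theta(m^2)$ and the \emph{average} slice (balanced or not) crosses $\Theta(m)$ edges. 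Averaging over directions does not change this picture: neither the upper bound theorem nor Dehn--Sommerville relations say anything about how many edges a single balanced hyperplane must cross, and a cross-section of a simple polytope can have linearly many vertices. So the step that produces the exponent $1-\frac{1}{d-1}$ is exactly the isoperimetric content of the conjecture, as you concede in your last paragraph; the cut/push machinery of Sections 3--4 tracks face-lattice updates combinatorially but gives no quantitative control on how minimum balanced separators grow when a facet is added, and no such lemma appears in the paper. (Minor: the statement itself conflates $m$ and $n$, and your two sides have only $m/3-|V'|$ vertices after deletion, which needs the usual adjustment of constants.)

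Second, you should be aware that no completion of this strategy can exist in general: subsequent to this paper, Loiskekoski and Ziegler (\emph{Simple polytopes without small separators}) constructed simple $4$-polytopes with $n$ vertices whose graphs admit no separator of size $o(n/\log^{3/2}n)$, refuting the conjectured $O(n^{2/3})$ bound for $d=4$. This is consistent with exactly where your argument stalls: the one case where slicing-style separators are known to achieve the stated exponent is $d=3$, where planarity of $3$-polytopal graphs gives $O(\sqrt{n})$ separators, matching $1-\frac{1}{d-1}=\frac{1}{2}$. For $d\geq 4$ the ``polytopal isoperimetric inequality'' you ask for is not merely missing, it is false, so any proof attempt along these lines must either restrict the class of polytopes or weaken the claimed bound.
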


Previous approaches to expansion problems have relied on transformations from well-understood families of polytopes.   Define a polytopal graph to be a good edge expander if its edge expansion is bounded below by a polynomial in $1/d$, where $d$ is the polytope's dimension.  Several families of polytopes have been shown to be good edge expanders by extending an argument that the hypercube is a good edge expander.  In particular, matching polytopes, independent set polytopes, and balanced matroid polytopes all have edge expansion at least 1.  The following theorem by Mihail summarizes the central argument that the hypercube is a good edge expander.

\begin{theorem}
The $d$-dimensional hypercube has edge expansion at least 1.
\end{theorem}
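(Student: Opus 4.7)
The plan is to prove this by induction on the dimension $d$, exploiting the well-known recursive decomposition of $Q_d$ into two copies of $Q_{d-1}$ joined by a perfect matching. The base case $d=1$ is immediate: $Q_1$ has one edge and two vertices, and any single-vertex subset has edge boundary of size $1$, meeting the bound. For the inductive step, fix $S \subseteq V(Q_d)$ with $1 \le |S| \le 2^{d-1}$ and write $S = S_0 \cup S_1$, where $S_0$ (respectively $S_1$) consists of those vertices whose last coordinate is $0$ (respectively $1$). Identify each $S_i$ with a subset of $V(Q_{d-1})$ in the natural way, and assume WLOG $|S_0| \ge |S_1|$.

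The edges of $Q_d$ partition into three groups: edges of the bottom $Q_{d-1}$, edges of the top $Q_{d-1}$, and matching edges joining identified vertices across the two copies. Writing $\delta_i$ for the edge boundary operator within the $i$th subcube, this gives the decomposition
\begin{equation}
|\delta(S)| \;=\; |\delta_0(S_0)| + |\delta_1(S_1)| + |S_0 \triangle S_1|,
\end{equation}
where the last term counts matching edges with exactly one endpoint in $S$. The next step is a case split on $|S_0|$. If $|S_0| \le 2^{d-2}$, then automatically $|S_1| \le 2^{d-2}$ and the induction hypothesis applied to each subcube yields $|\delta_0(S_0)| \ge |S_0|$ and $|\delta_1(S_1)| \ge |S_1|$, so $|\delta(S)| \ge |S_0| + |S_1| = |S|$, as desired.

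The case that requires more care, and which I expect to be the main obstacle, is when $|S_0| > 2^{d-2}$, since here the induction hypothesis cannot be applied directly to $S_0$. The idea is to pass to the complement: letting $T_0 := V(Q_{d-1}) \setminus S_0$, we have $|T_0| < 2^{d-2}$ and $|\delta_0(S_0)| = |\delta_0(T_0)| \ge |T_0| = 2^{d-1} - |S_0|$ by induction. Meanwhile, the global constraint $|S_0| + |S_1| \le 2^{d-1}$ forces $|S_1| < 2^{d-2}$, so the induction hypothesis still gives $|\delta_1(S_1)| \ge |S_1|$. Finally, the trivial bound $|S_0 \triangle S_1| \ge |S_0| - |S_1|$ (attained when $S_1 \subseteq S_0$ under the identification) combines with the previous two inequalities to yield
\begin{equation}
|\delta(S)| \;\ge\; (2^{d-1} - |S_0|) + |S_1| + (|S_0| - |S_1|) \;=\; 2^{d-1} \;\ge\; |S|,
\end{equation}
which closes the induction. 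The only subtlety is keeping the case split clean and verifying that the three lower bounds combine additively; no Fourier analysis or probabilistic machinery is needed.
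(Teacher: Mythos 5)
Your proof is correct, and it takes a genuinely different route from the paper. You argue by induction on $d$ via the standard decomposition of the cube into two $(d-1)$-subcubes joined by a perfect matching, splitting on whether the larger piece $S_0$ exceeds half of its subcube and passing to the complement $T_0$ when it does; the identity $|\delta(S)| = |\delta_0(S_0)| + |\delta_1(S_1)| + |S_0 \triangle S_1|$ and the bound $|S_0\triangle S_1|\ge |S_0|-|S_1|$ then combine exactly as you say (the only housekeeping is the trivial boundary cases $S_1=\emptyset$ or $T_0=\emptyset$, where the relevant inequality holds with both sides zero rather than by the induction hypothesis). The paper instead uses Mihail's canonical-paths (flow-congestion) argument: route a uniformly random shortest path between every ordered pair of vertices, observe that each edge carries at most $2^{d-1}$ paths, and note that all $|A|\,|\overline{A}|$ paths leaving $A$ must cross the boundary, giving $|A||\overline{A}|/2^{d-1} \ge |A|$. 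Your induction is more elementary and self-contained — it is essentially the easy direction of the edge-isoperimetric inequality for the cube and needs no probabilistic or flow machinery — whereas the paper's congestion argument is the one that the surrounding discussion leans on, since it is the template that extends to matching polytopes, matroid polytopes, and the other $0/1$ families cited there, where no clean recursive decomposition is available.
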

\begin{proof}
Consider the $d$-dimensional hypercube $P$ with vertex set $Q$.  Consider any subset $A$ of the vertices of $Q$ with $|A|\leq 2^{d-1}$.  We need to show that $|\Gamma(A)|\geq |A|$.  Consider all pairs of vertices $x_i,y_i \in Q$, and for each pair consider a shortest path $\rho_i$ chosen uniformly at random from all  shortest $x_i,y_i$ paths.  Observe that there are $2^{2d}$ paths $\rho_i$ in total, and that each path has average length $d/2$.  Observe further that the total number of edges in all the paths, when summed, is $d\cdot 2^d$.  Thus, each edge of the cube is a member of at most $2^{d-1}$ paths in each direction.
Next, note that the number of paths from $|A|$ to $\overline{A}$ that pass through $\Gamma(A)$ is $|A|\cdot|\overline{A}|$.  Consequently, 
\begin{equation}
|\Gamma(A)|\geq\frac{|A||\overline{A}|}{2^{d-1}} 
\end{equation}

Last, since $|A|\leq 2^{d-1}$, $\overline{A}\geq 2^{d-1}$, giving

\begin{equation}
|\Gamma(A)|\geq\frac{|A||\overline{A}|}{2^{d-1}} \geq |A| 
\end{equation}

which completes the proof.

\end{proof}

An open question posed by Mihail and Vazirani considers the edge expansion of $0/1$ polytopes \cite{feder}.  These are polytopes whose vertices lie only in the set $\{0,1\}^d$.  There are two related conjectures in this category, suggesting respectively that $0/1$ polytopes are either good or very good expanders.

\begin{conjecture}
There exists a polynomial $p(d)$ such that every 0/1 $d$-polytope has edge expansion at least $1/p(d)$. 
\end{conjecture}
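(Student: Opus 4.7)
The plan is to generalize the hypercube argument stated immediately before this conjecture. For a $0/1$ $d$-polytope $P$ with vertex set $V \subseteq \{0,1\}^d$ and edge-vertex graph $G_P$, I would try to route a multicommodity flow carrying one unit between each ordered pair $(x,y) \in V \times V$ along paths \emph{in} $G_P$, then upper-bound the congestion on any single edge by a polynomial $q(d)$. The standard reduction from canonical paths to edge expansion then gives $h(G_P) \geq |V|/(2 q(d))$, which would establish the conjecture with $p(d) = 2q(d)$ after the straightforward normalization.

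First, I would try to use the ambient hypercube $Q_d$ as a scaffold: for each pair $(x,y)$ take a uniformly random coordinate-flipping path $\rho_{xy}$ in $Q_d$ as in the previous theorem, then modify it into a path $\tilde{\rho}_{xy}$ that lies entirely in $G_P$. Since intermediate vertices of $\rho_{xy}$ need not be vertices of $P$, the natural fix is to replace each maximal subpath of $\rho_{xy}$ leaving $V$ by a geodesic in $G_P$ between its endpoints in $V$. A second step is to bound congestion: the hypercube argument shows each edge of $Q_d$ carries at most $2^{d-1}$ paths; the surgery step can only redistribute this to polynomially many edges of $G_P$ provided each replacement geodesic has length bounded by a polynomial in $d$, which would follow from any polynomial Hirsch-style bound for $0/1$ polytopes (Kleinschmidt-Onn gives a $\mathrm{poly}(d)$ diameter bound in this case, which I would cite and use as a black box). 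The third step is to combine these bounds into the expansion inequality for arbitrary vertex subsets $A$ with $|A| \leq |V|/2$, following the final lines of the hypercube proof verbatim.

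The hard part will be the surgery step. Unlike the hypercube, two $0/1$-vertices of $P$ at Hamming distance $1$ need not be adjacent in $G_P$, and adjacent vertices in $G_P$ may be at large Hamming distance, so one cannot simply ``push'' a hypercube path onto $G_P$ edge by edge. Quantifying how much the surgery inflates the total length of $\rho_{xy}$ and, more importantly, showing that the inflated paths do not pile up on any single edge of $G_P$ is the central difficulty; a naive adversarial routing could concentrate exponentially many flipping paths through the same polytope edge. A possible workaround would be to replace the deterministic surgery with a randomized one, routing each detour along a uniformly chosen geodesic in $G_P$ and using a symmetry or averaging argument to spread congestion evenly. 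An alternative route, avoiding hypercube scaffolding altogether, would be to apply the inductive cut construction of Section~2 to $0/1$ polytopes and show by induction on $n$ that a single cut drops edge expansion by at most a factor $1 - 1/\mathrm{poly}(d)$; this would require a structural lemma that realizable cutsets of $0/1$ polytopes themselves have polynomial edge expansion into their complement, which is plausible but not known and is essentially of the same difficulty as the conjecture itself.
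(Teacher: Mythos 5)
The statement you are trying to prove is exactly the Mihail--Vazirani conjecture, and the paper presents it as a \emph{conjecture}, not a theorem. There is no proof in the paper for your attempt to be compared against: the authors state it, survey the known partial results (Kaibel's expansion bounds for hypersimplices, stable set polytopes, and all simple $0/1$ polytopes; the exhaustive verification for $d\leq 5$), and then speculate about how their inductive enumeration machinery \emph{might} eventually be brought to bear on it. That is all. So the absence of a completed argument in your write-up is not a failure to reproduce the paper; the paper itself has nothing to reproduce.

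As to the substance of your sketch: you have correctly located the gap, and you should not minimize it. The hypercube argument works because the canonical-path family is exactly symmetric --- every edge of $Q_d$ lies in the same number of coordinate-flipping paths, so dividing total path length by the number of edges immediately gives a per-edge congestion of $2^{d-1}$. The moment you perform surgery to reroute detours onto $G_P$, that symmetry is gone, and nothing in the construction prevents a single polytope edge from appearing in exponentially many detour geodesics. Your proposed fix of choosing each detour geodesic uniformly at random does not repair this: if $G_P$ has a narrow cut, then \emph{every} geodesic between the two sides must cross that cut, and randomizing over geodesics merely redistributes load among the few edges in the cut. Appealing to ``spread the congestion evenly'' is therefore circular --- it presupposes a quantitative version of the very expansion property you are trying to prove. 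The Kleinschmidt--Onn bound controls detour \emph{length}, which is the easy half of the estimate; it says nothing about per-edge \emph{load}, which is the whole question. Your alternative inductive route has the same defect, and you already say so yourself: the structural lemma you would need --- that realizable cutsets of $0/1$ polytopes have polynomially bounded boundary-to-size ratio --- is a restatement of the conjecture for hyperplane-cuttable vertex sets. As a research direction the canonical-paths-with-surgery idea is reasonable and has been tried in the literature, but neither it nor the inductive variant constitutes a proof, and the conjecture remains open.
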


\begin{conjecture}
Each 0/1 $d$-polytope has edge expansion at least 1.
\end{conjecture}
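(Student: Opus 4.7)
The natural plan is to generalize the canonical-paths / multicommodity-flow argument used in the preceding theorem for the hypercube. For an arbitrary 0/1 $d$-polytope $P$ with vertex set $V \subseteq \{0,1\}^d$, I would try to construct, for each ordered pair $(x,y) \in V \times V$, a probability distribution $\mu_{x,y}$ over $x$-$y$ paths in $G_P$ whose aggregate load on any single edge $e$ is at most $\tfrac{1}{2}|V|^2$. Then for any $A \subseteq V$ with $|A| \leq |V|/2$, exactly $|A|\cdot|\overline{A}|$ units of flow cross $\delta(A)$, so $|\delta(A)| \geq |A|\cdot|\overline{A}| / \tfrac{1}{2}|V|^2 \geq |A|$, yielding edge expansion at least $1$. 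The hypercube case works because random shortest paths obtained by flipping the coordinates on which $x$ and $y$ differ in a uniformly random order give each edge load exactly $2^{d-1}$ by translational symmetry.

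The immediate obstruction in the general case is that a single-coordinate flip from a vertex of $P$ need not land on another vertex of $P$, so cube-style canonical paths do not live in $G_P$. I would replace them by \emph{monotone} paths: for a fixed direction vector $c \in \mathbb{R}^d$ chosen generically, define $\rho_{x,y,c}$ as the path obtained by the simplex method started at $x$ using an anti-cycling pivot rule biased toward $c$, and let $\mu_{x,y}$ average over $c$ drawn from a suitable distribution on the sphere. Each step of such a path is guaranteed to be an edge of $G_P$, and the geometric choice of $c$ gives a parameterization one can analyze. An alternative I would pursue in parallel is to use the paper's own cut/push machinery: induct on the number of vertices of a 0/1 polytope, showing that truncating a single vertex (when it keeps the object 0/1) perturbs the edge expansion only in a controllable way, and anchoring the induction at the hypercube.

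The key step is then bounding congestion. Under the monotone-paths construction, the number of pairs $(x,y)$ whose randomly-chosen path traverses a fixed edge $e = (u,v)$ can be estimated by the number of vertex pairs for which $e$ lies on a shortest monotone segment, which in turn is at most the product of the number of vertices lying ``below'' $u$ and the number of vertices lying ``above'' $v$ in the direction defined by $e$. For the hypercube these counts are binomial coefficients and multiply to the desired $2^{d-1}$; for a general 0/1 polytope I would appeal to the fact that these sub- and super-vertex sets are themselves vertex sets of 0/1 polytopes (faces of $P$ intersected with halfspaces defined by $e$), and recursively apply the bound in lower dimension.

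The main obstacle, and the reason this conjecture has remained open, is precisely this congestion bound: 0/1 polytopes lack the translational symmetry of the cube, so there is no automatic equidistribution of path-load across edges. A particular pathology I would expect to encounter is a 0/1 polytope with a ``bottleneck'' edge lying on many 2-faces simultaneously, which could concentrate flow. Overcoming this would likely require either a clever reweighting of $\mu_{x,y}$ to favor paths that avoid such edges, or a structural theorem asserting that 0/1 polytopes cannot have too many 2-faces sharing a single edge — a statement which, if provable through the enumeration machinery of Section~2, would simultaneously yield both of Mihail and Vazirani's conjectures.
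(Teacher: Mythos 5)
You have not proved this statement, and neither does the paper: this is precisely the Mihail--Vazirani conjecture, which the paper records as an \emph{open} conjecture. The surrounding text only surveys partial results (Kaibel's expansion bounds for hypersimplices, stable set polytopes, and simple $0/1$ polytopes, and the cube argument reproduced in the preceding theorem) and proposes, without carrying it out, that the cut/push induction over facet count might eventually be brought to bear. So there is no paper proof to compare against, and your proposal must be judged on its own terms as an attempted proof --- which it is not, as you yourself concede in the final paragraph.

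The genuine gap is exactly the congestion bound, and the routes you sketch do not close it. The hypercube argument works because vertex-transitivity forces the random shortest paths to load every edge equally (at most $2^{d-1}$ per direction); a general $0/1$ polytope has no such symmetry, and your monotone/simplex-path construction can behave very badly: for a generic objective $c$ the monotone path from $x$ can be exponentially long (Klee--Minty-type behaviour survives in $0/1$ settings in the form of long monotone paths), which already destroys the averaging step ``total path length $\approx \frac{d}{2}|V|^2$'' on which the cube calculation rests, and nothing prevents the flow from concentrating on a single edge. Your proposed estimate that the number of pairs routed through $e=(u,v)$ is at most the number of vertices below $u$ times the number above $v$ is, in the worst case, just the trivial bound $\Theta(|V|^2)$, and the recursion to lower-dimensional $0/1$ polytopes controls vertex counts, not per-edge load, so it does not yield the required $\frac{1}{2}|V|^2$ bound. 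The alternative induction through the paper's machinery also fails at the first step: truncating a vertex of a $0/1$ polytope essentially never produces another $0/1$ polytope, so the class is not closed under the cut/push operations and there is no inductive family to run the argument over. What you have is a plausible research program whose decisive step --- either a reweighting scheme that equidistributes flow or a structural theorem limiting how many pairs' canonical paths can share an edge --- is precisely the open problem.
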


These latter conjectures are of interest because their existence would imply efficient randomized MCMC approximation algorithms for a number of open combinatorial and counting problems, as established by Alon, Sinclair, and Jerrum \cite{mihail,kaibel}. This technique was successfully applied by Jerrum and Sinclair to provide an efficient randomized algorithm for computing the permanent (using a special class of polytopal graphs known to expand well) and by Dyer, Frieze, and Kannan to measure the volume of a convex body\cite{sinclair,dyer}.  The associated argument is easy to understand.  It was established by Alon, Sinclair, and Jerrum that random walks on $d$-regular expander graphs possess the \emph{rapid mixing} propety, that is, they converge arbitrarily close to the stationary distribution of the walk after at most a polynomial (in $d$) number of steps \cite{noga, sinclair}.  Consequently, if polytope graphs are good expanders, then there exists an efficient algorithm to sample the vertices of a polytope (under certain mild assumptions).  Finally, methods have been developed to use efficient sampling algorithms for a combinatorial population to efficiently measure its size via MCMC.  Because several counting problems can be modeled as particular 0/1 polytopes, efficient sampling of these polytopes can produce efficient randomized algorithms for these problems.  Many counting problems, such as estimating network reliability, can be efficiently solved by positive resolution of the Mihail-Vazirani conjecture and applying the preceding method to the class of matroid polytopes. 

This technique was successfully applied by Jerrum and Sinclair to provide an efficient randomized algorithm for computing the permanent (using a special class of polytopal graphs known to expand well) and by Dyer, Frieze, and Kannan to measure the volume of a convex body\cite{sinclair,dyer}.  Many counting problems, such as estimating network reliability, can be efficiently solved by positive resolution of the Mihail-Vazirani conjecture and applying the preceding method to the class of matroid polytopes.  While 0/1 matroids have not yet been shown to expand, Kaibel showed that several additional subclasses of 0/1 polytopes are good expanders, including hypersimplices, stable set polytopes, and all simple 0/1 polytopes \cite{kaibel}.  

We also propose here that bounding the edge expansion may have positive implications for randomized algorithms for linear programming, such as those presented recently by Kelner and Spielman \cite{recentpaper}.  In particular, it may be possible to identify new families of polytopes where randomized algorithms converge in strongly polynomial time. %

Most successful approaches to these conjectures have been non-constructive; either they rely on the probabilistic method to prove the existence of polytopal graphs with particular properties, or they operate by reducing certain families of polytopes to known polytopes.  Part of the reason is that polytopal graphs are difficult objects to consider in their generality; while several classes of polytopes are well-understood, particularly in low dimensions, mechanisms for reasoning about higher-dimensional polytopes as a class do not exist. In the preceding sections of the paper, we have developed a new approach to edge expansion conjectures on polytopes that does not rely on transformation or probabilistic reasoning, but which explicitly constructs the class of all polytopal graphs by induction.  We believe that this reasoning may be usefully applied to address the the Mihail-Vazirani edge expansion conjecture, and to a lesser extent, Kalai's vertex expansion conjecture.

First, we note the obvious experimental method enabled by the algorithm; in the event that we seek a counterexample of a given dimension to some conjecture, we may simply run the polytopal graph generator and eventually we will be guaranteed to generate the corresponding counterexample, if it exists.  However, as the number of distinct polytopal graphs grows very rapidly with $n$ and $d$, this is not a practical way to produce counterexamples at very high dimensions.  Nonetheless, there are many nontrivial questions that may be resolved by using a brute-force search across low dimensions, such as diameter conjectures and verifying Hamiltonicity. Our survey suggests there exists something of a cottage industry in the literature of deriving single-instance results of the form ``There exists at least one five-dimensional polytope with fewer than 500 vertices with property X''.  This may be in part due to the lack of knowledge about the structure of higher-dimensional polytopes.  Indeed, a relevant example is given by Kaibel \cite{kaibel}, who proves that the Mihail-Vazirani conjecture holds on all 0/1 polytopes of dimension $d \leq 5$.  We observe that this result might have been easily produced computationally using our own method.

Theoretical approaches enabled by the induction algorithm are of greater interest with regard to verifying the two conjectures given  above.  The basic approach enabled by this algorithm will be the same as that proposed for the $d$-step conjecture; we simply carry out induction over the set of all polytopes according to the number of facets, and bound the change in the expansion of the graph observed during the inductive step.  

\section{Future Work}
Induction over polytopes appears to be a powerful method.  Upcoming versions of this paper may identify the appropriate enumerative complexity class of the polytope enumeration problem and discuss variants that generate nonsimple polytopes and 0/1 polytopes.




\bibliography{polytope}

\end{document}